\theoremstyle{plain}
\newtheorem{thm}{Theorem}[section]
\newtheorem{prop}[thm]{Proposition}
\newtheorem{cor}[thm]{Corollary}
\newtheorem{lem}[thm]{Lemma}
\theoremstyle{definition}
\newtheorem{defn}[thm]{Definition}
\theoremstyle{remark}
\numberwithin{equation}{section}
\newcommand{\sL}{\mathcal{L}}
\newcommand{\sN}{\mathcal{N}}
\newcommand{\sS}{\mathcal{S}}
\newcommand{\sStw}{\widetilde{\sS}}
\newcommand{\bL}{\mathbf{L}}
\newcommand{\bt}{\mathbf{t}}
\newcommand{\bs}{\mathbf{s}}
\newcommand{\br}{\mathbf{r}}
\newcommand{\bbr}{\bar{\mathbf{r}}}
\newcommand{\wL}{\widetilde{L}}
\newcommand{\wsN}{\widetilde{\sN}}
\newcommand{\wrho}{\widetilde{\rho}}
\newcommand{\del}{\partial}
\newcommand{\Gh}{\widehat{G}}
\newcommand{\CC}{\mathbb{C}}
\newcommand{\NN}{\mathbb{N}}
\newcommand{\RR}{\mathbb{R}}
\newcommand{\QQ}{\mathbb{Q}}
\newcommand{\ZZ}{\mathbb{Z}}
\newcommand{\id}{\textup{id}}
\newcommand{\nin}{\noindent}
\newcommand{\ra}{\rightarrow}
\newcommand{\xra}{\xrightarrow}
\newcommand{\lra}{\longrightarrow}
\newcommand{\co}{\colon\!}
\newcommand{\BO}{\textup{BO}}
\newcommand{\BTOP}{\textup{BTOP}}
\newcommand{\G}{\textup{G}}
\newcommand{\TOP}{\textup{TOP}}
\newcommand{\Gsign}{\textup{G-sign}}
\renewcommand{\mod}{\textup{mod }}
\newcommand{\red}{\textup{red}}
\newcommand{\res}{\textup{res}}
\renewcommand{\max}{\textup{max}}
\newcommand{\ol}{\overline}
\newcommand{\sign}{\textup{sign}}
\newcommand{\glue}{\textup{glue}}
\newcommand{\glueh}{h'}
\newcommand{\RhG}{R_{\widehat G}}
\newcommand{\RhGm}{R_{\widehat G}^-}
\title[Higher simple structure sets of lens spaces]{Higher simple structure sets of lens spaces \\ with the fundamental group of order $2^K$}
\author{L'udov\'it Balko}
\author{Tibor Macko}
\author{Martin Niepel}
\author{Tom\'a\v s Rusin}
\subjclass[2010]{Primary: 57R65, 57S25}
\keywords{fake lens space, higher structure set, $\rho$-invariant, surgery}
\address{Faculty of Mathematics, Physics, and Informatics, Comenius University, Mlynsk\'a dolina,
SK-824 48, Bratislava, Slovakia} \email{ludovit.balko@fmph.uniba.sk} \email{martin.niepel@fmph.uniba.sk} \email{tomas.rusin@fmph.uniba.sk}
\address{Institute of Mathematics, Slovak Academy of Sciences, \v Stef\'anikova 49, Bratislava, SK-81473, Slovakia} \email{macko@mat.savba.sk}
\thanks{This work was supported by the grant VEGA 1/0101/17, and by the Slovak Research and Development Agency under the contract No. APVV-16-0053.}
\date{\today}
\begin{document}

\maketitle

\begin{abstract}
	Extending work of many authors we calculate the higher simple structure sets of lens spaces in the sense of surgery theory in the case when the fundamental group has order a power of $2$. As a corollary we also obtain a calculation of the simple structure set of the products of lens spaces and spheres of dimension grater or equal to $3$ in this case.
\end{abstract}



\section*{Introduction}
\label{sec:intro}


The main result of this paper, Theorem~\ref{thm:main-thm}, calculates the topological higher simple structure sets $\sS^{s}_{\del} (L \times D^{m})$ in the sense of surgery theory, see Section~\ref{sec:higher-str-sets} for a definition, for $L$ a fake lens space with the fundamental group $\ZZ/N \cong G = \pi_{1} (L)$ for $N = 2^{K}$ and $m \geq 1$. The case $m=0$ was done by Wall for $N$ odd \cite[14.E]{Wall(1999)}, by Lopez de Medrano \cite{LdM(1971)} and Wall for $N=2$ \cite[14.D]{Wall(1999)} and by Macko and Wegner for $N=2^{K}$ \cite{Macko-Wegner(2009)} and for general $N = 2^{K} \cdot M$, where $K \geq 1$ and $M$ is odd \cite{Macko-Wegner(2011)}. The case $m \geq 1$ was done by Madsen and Rothenberg for $N=2$ and $N$ odd \cite{Madsen-Rothenberg(1989)}. Hence the remaining cases are when $m \geq 1$ and $N = 2^{K} \cdot M$, where $K \geq 1$ and $M$ is odd. In this paper we take care of the case $N=2^K$ and we plan to address the most general case in a subsequent work. We also obtain Corollary~\ref{cor:higehr-str-sets-L-times-S} where we calculate $\sS^{s} (L \times S^m)$ for $m \geq 3$.


The calculations build on the work from all the above mentioned sources. The basic idea is to extend the methods of \cite{Macko-Wegner(2009)}, which in turn use \cite{Wall(1999)}, from the case $m=0$ to the case $m \geq 1$. Several issues need to be verified and adjusted in the present case, which is what we do. We use the $\rho$-invariant for manifolds with boundary from \cite{Madsen-Rothenberg(1989)} and some of its general properties, but we need to do a bit more in our special case $N=2^K$. Most importantly, a formula of Wall for the $\rho$-invariant of certain closed manifolds from \cite[Theorem 14C.4]{Wall(1999)} needs to be generalized and slightly modified. The generalization and its proof is the main technical result of this paper. It is formulated in Theorem~\ref{thm:rho-formula-cp-times-disk} and the proof is based on a technical Proposition~\ref{prop:alpha-and-beta-depend-linearly-on-s-4i}. 

The paper is organized as follows. In Section~\ref{sec:results} we state the main Theorem~\ref{thm:main-thm} and Corollary~\ref{cor:higehr-str-sets-L-times-S}. Section~\ref{sec:higher-str-sets} contains the definition of the higher simple structure sets for a general manifold $X$ and their properties. Section~\ref{sec:higher-str-set-for-cp} deals with the cases when $X$ are complex projective spaces, which we also need as explained at the beginning of that section and Proposition~\ref{prop:alpha-and-beta-depend-linearly-on-s-4i} is proved. Section~\ref{sec:lens-times-disk} contains the first results about the cases when $X$ are fake lens spaces based on the long surgery exact sequence, which reduces the calculation to a certain extension problem, see Theorem~\ref{thm:how-to-split-alpha-k}. In Section~\ref{sec:the-rho-invariant} the $\rho$-invariant is discussed which is the main tool to solve the remaining extension problem, in particular the above mentioned Theorem~\ref{thm:rho-formula-cp-times-disk} is proved. The formula from this theorem is then used in Section~\ref{sec:calculations} which contains the proof of the main Theorem~\ref{thm:main-thm} and Corollary~\ref{cor:higehr-str-sets-L-times-S}. The final Section~\ref{sec:final-remarks} contains a discussion of further directions.


\section{Results}
\label{sec:results}


A {\it fake lens space} $L = L (\alpha)$ is a topological manifold given as the orbit space of a free action $\alpha$ of a finite cyclic group $G = \ZZ/N$ on a sphere $S^{2d-1}$. The main result of this paper is the following theorem about them.

\begin{thm}\label{thm:main-thm}
	Let $L = L (\alpha)$ be a $(2d-1)$-dimensional fake lens space for some free action $\alpha$ of the cyclic group $G = \ZZ/N$ with $N = 2^K$ for some $K \geq 1$ on $S^{2d-1}$ with $d \geq 2$ and let $k \geq 1$. Then we have isomorphisms
	\begin{align*}
		(\wrho_{\del},\bbr_{0},\bbr,\br) \co \sS^{s}_{\del} (L \times D^{2k}) & \xra{\cong} 
		\begin{cases}
			F^{+} \oplus \ZZ \oplus T'_{N} \oplus T_{2} \quad & d = 2e, k=2l \\
			F^{-} \oplus \ZZ/2 \oplus T'_{N} \oplus T_{2} \quad & d = 2e, k=2l+1 \\
			F^{-} \oplus \ZZ \oplus T'_{N} \oplus T_{2} \quad & d = 2e+1, k=2l \\
			F^{+} \oplus \ZZ/2 \oplus T'_{N}  \oplus T_{2} \quad & d = 2e+1, k=2l+1 
		\end{cases} \\
		(\bbr,\br) \co \sS^{s}_{\del} (L \times D^{2k+1}) & \xra{\cong} T_{2} (\textup{odd}) \quad \textup{also} \; k=0,
	\end{align*}
	where the meaning of the symbols in the target is as follows.
	\begin{enumerate}
		\item $F^{+}$ is a free abelian group of rank $2^{K-1}$;
		\item $F^{-}$ is a free abelian group of rank $2^{K-1} - 1$;
		\item Let $c_N (d,k) = e-1$ when $(d,k)=(2e,2l)$ and let $c_N (d,k) = e$ in other cases. Then 
		 \[ 
		 T'_{N} \cong \bigoplus_{i=1}^{c_N(d,k)} \ZZ/2^{\textup{min} \{ 2i , K \}}; 
		 \]
		\item Let  $c_2 (d,k) = e$ when $(d,k)=(2e+1,2l)$ and let $c_2 (d,k) = e-1$ in other cases. Then 
		 \[ 
		 T_{2} \cong \bigoplus_{i=1}^{c_2 (d,k)} \ZZ/2; 
		 \]  
		\item Let  $c_2 (d,k,\textup{odd}) = e-1$ when $(d,k)=(2e,2l+1)$ and let $c_2 (d,k,\textup{odd}) = e$ in other cases. Then
		 \[ 
		 T_{2} (\textup{odd}) \cong \bigoplus_{i=1}^{c_2 (d,k,\textup{odd})} \ZZ/2; 
		 \] 
		\item The symbol $\wrho_{\del}$ denotes the reduced $\rho$-invariant for manifolds with boundary;
		\item The invariant $\bbr$ is an invariant derived from the splitting invariants along $4i$-dimensional submanifolds;
		\item The invariant $\br$ consists of the splitting invariants along $(4i-2)$-dimensional submanifolds;
		\item The invariant $\bbr_{0}$ is an invariant derived from the splitting invariants along $2k$-dimensional submanifolds.
	\end{enumerate}
\end{thm}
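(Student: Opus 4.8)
The plan is to run the topological surgery exact sequence associated with the manifold with boundary $L \times D^m$ relative to $L \times S^{m-1}$ (equivalently, to use the homotopy-theoretic model of the higher structure sets set up in Section~\ref{sec:higher-str-sets}) and then to identify every map in it explicitly. The flanking terms are the normal invariant group $[\Sigma^m (L_+), \G/\TOP]$ and the Wall groups $L^s_{*}(\ZZ[G])$; since $N = 2^K$, these are computable from the known $2$-local (and rational) homotopy type of $\G/\TOP$ together with the classical calculation of $L^s_*(\ZZ[\ZZ/2^K])$. The case $m = 2k+1$ is the easy one: there the surgery obstruction map is injective on the relevant normal invariants away from the summands detected by splitting along $(4i-2)$-dimensional submanifolds, so only the $\ZZ/2$'s assembling to $T_2(\textup{odd})$ survive, detected by $(\bbr,\br)$, which for $k=0$ recovers the Madsen--Rothenberg answer. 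So the substance is the case $m = 2k$.

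For $m = 2k$ I would first invoke the reduction carried out in Section~\ref{sec:lens-times-disk} (Theorem~\ref{thm:how-to-split-alpha-k}), which presents $\sS^{s}_{\del}(L\times D^{2k})$ as an extension of a subgroup of the normal invariants by a quotient of a Wall group, thereby reducing the whole problem to deciding whether this extension splits and computing its two ends. The torsion part $T'_{N} \oplus T_{2}$ of the answer is carried by the splitting invariants $\br$ and $\bbr$ along submanifolds of dimension $4i-2$ and $4i$: one shows these are realized (surjectivity onto the claimed torsion group) by explicit surgery constructions modeled on fake lens subspaces, and that they exhaust the image of the normal-invariant term modulo the $L$-theory relations. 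The dimension-dependent shifts — $c_N(d,k)=e-1$ versus $e$, and likewise for $c_2$ and $c_2(\textup{odd})$ — come precisely from which top submanifold dimension is available in $L \times D^{2k}$ and from the behavior of the surgery obstruction in the top Wall group $L^s_{2d-1+2k}(\ZZ[G])$.

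The free part $F^{\pm}$ together with the $\ZZ$ or $\ZZ/2$ summand is detected by the reduced $\rho$-invariant $\wrho_{\del}$ for manifolds with boundary. To evaluate it on the relevant generators — structure set elements produced by the $\CC P$-bundle type constructions of Section~\ref{sec:higher-str-set-for-cp} — I would use the generalized Wall formula, Theorem~\ref{thm:rho-formula-cp-times-disk}, whose proof rests on Proposition~\ref{prop:alpha-and-beta-depend-linearly-on-s-4i}. The rank $2^{K-1}$ of $F^{+}$, respectively $2^{K-1}-1$ of $F^{-}$, is the number of Galois-conjugacy classes of nontrivial characters of $\ZZ/2^K$ that contribute a free summand to the relevant piece of $L^s_*(\ZZ[G])$, the $-1$ accounting for the one character whose contribution is $2$-torsion in that case; the superscript $\pm$ and the $\ZZ$-versus-$\ZZ/2$ alternative are governed by the value of $2d-1+2k \bmod 4$, i.e.\ whether the top Wall group is of signature type ($\ZZ$) or Arf type ($\ZZ/2$). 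Comparing these $\wrho_{\del}$ values against a basis, and checking compatibility with the torsion invariants, shows the extension of Theorem~\ref{thm:how-to-split-alpha-k} splits and gives the stated isomorphism; Corollary~\ref{cor:higehr-str-sets-L-times-S} then follows by the standard comparison of $\sS^{s}(L\times S^m)$ with $\sS^{s}_{\del}(L\times D^m)$ using $S^m = D^m \cup D^m$.

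The main obstacle, as the introduction already flags, is the explicit evaluation of $\wrho_{\del}$ on the free generators: one must upgrade Wall's closed-manifold formula to the bounded setting and track carefully the linear dependence of the relevant characteristic-number contributions on the parameters $s_{4i}$, since any sign error or miscount there directly corrupts the rank dichotomy $2^{K-1}$ versus $2^{K-1}-1$ and the $\ZZ$-versus-$\ZZ/2$ alternative. A secondary, purely organizational difficulty is the bookkeeping of the four parity branches and the $e$-versus-$(e-1)$ corrections in $c_N$, $c_2$, and $c_2(\textup{odd})$, which must be kept consistent simultaneously through the normal invariants, the Wall groups, and the splitting invariants.
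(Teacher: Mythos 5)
Your overall route is the paper's route: the long surgery exact sequence, the reduction of the even case to an extension problem which is split off by the $\rho$-invariant (Theorem~\ref{thm:how-to-split-alpha-k}), and the evaluation of $\wrho_{\del}$ via the generalized Wall formula (Theorem~\ref{thm:rho-formula-cp-times-disk} resting on Proposition~\ref{prop:alpha-and-beta-depend-linearly-on-s-4i}), with the odd case disposed of by the vanishing/surjectivity properties of $\theta$. However, there is a genuine gap in how you propose to obtain the torsion part. You describe $T'_N \oplus T_2$ as carried by splitting invariants that are ``realized by explicit surgery constructions'' and ``exhaust the image of the normal-invariant term modulo the $L$-theory relations.'' That scheme would produce the full normal-invariant summands $\bigoplus \ZZ/2^{K}$, not the stated $\bigoplus \ZZ/2^{\min\{2i,K\}}$. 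In the paper the group $\bar T$ of Theorem~\ref{thm:how-to-split-alpha-k} is the \emph{kernel} of $[\wrho_{\del}]$ on the reduced normal invariants, and the cyclic summands of order $2^{\min\{2i,K\}}$ are proper subgroups of the $\ZZ/2^{K}$ summands, cut out by the explicit formulas of Proposition~\ref{rho-formula-lens-sp} and identified via the arithmetic of the truncated polynomial rings $\ZZ[x](a)$ and the universal polynomials $r_n^{\pm}$ (the identity $A_K(a)=B_K(a)$ imported from Macko--Wegner). This kernel computation is the heart of the torsion calculation and is absent from your plan; the $\rho$-invariant is not only responsible for the free part.

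Two smaller misidentifications: the $\ZZ$ or $\ZZ/2$ summand detected by $\bbr_0$ is \emph{not} detected by $\wrho_{\del}$ --- it lies inside $\bar T=\ker[\wrho_{\del}]$ and is the splitting invariant along $\{\ast\}\times S^{2k}$, extracted from $T_F(d,k)$ (giving $\ZZ$) or from $T_2(d,k)$ (giving $\ZZ/2$). Consequently the $\ZZ$-versus-$\ZZ/2$ alternative is governed by the parity of $k$ (whether $2k\equiv 0$ or $2\pmod 4$), not by $2d-1+2k \bmod 4$; note that $n\equiv 1\pmod 4$ occurs in both a $\ZZ$ case ($d=2e+1$, $k=2l$) and a $\ZZ/2$ case ($d=2e$, $k=2l+1$). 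Only the superscript on $F^{\pm}$ is governed by $(-1)^{d+k}$, i.e.\ by which eigenspace $\RhG^{\pm}$ receives the $\rho$-invariant.
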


The invariant $\wrho_{\del}$ is defined in Section~\ref{sec:the-rho-invariant}, the invariants $\br$ are defined in Section~\ref{sec:lens-times-disk} and the invariants $\bbr_{0}$ and $\bbr$ are defined in Section~\ref{sec:calculations}.

\begin{cor} \label{cor:higehr-str-sets-L-times-S}
	For $k \geq 2$ we have isomorphisms 
	\[	
		(\red{_\del},\br',\br'') \co \sS^{s} (L \times S^{2k}) \cong \sS^{s}_{\del} (L \times D^{2k}) \oplus T_N (d) \oplus T_2 (d)
	\]
	and for $k \geq 1$ we have isomorphisms 
	\[	
		(\red{_\del},\br',\br'') \co\sS^{s} (L \times S^{2k+1}) \cong \sS^{s}_{\del} (L \times D^{2k+1}) \oplus T_N (d) \oplus T_2 (d)
	\]
	where 
	\[
	T_N (d) \cong \bigoplus_{i=1}^{\lfloor (d-1)/2 \rfloor} \ZZ/2^{K} \quad \textup{and} \quad T_2 (d) \cong \bigoplus_{i=1}^{\lfloor d/2 \rfloor} \ZZ/2.
	\]
\end{cor}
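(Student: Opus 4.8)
\emph{Proof strategy.} The plan is to prove both isomorphisms uniformly in the form $\sS^{s}(L\times S^{m})\cong\sS^{s}_{\del}(L\times D^{m})\oplus T_{N}(d)\oplus T_{2}(d)$, the point being that the extra summand is the reduced normal invariant group $\wsN(L)=\widetilde{[L,\G/\TOP]}$ of the fake lens space and appears because crossing with $S^{m}$ annihilates the surgery obstruction of every normal map pulled back from $L$. The first step identifies $\wsN(L)\cong T_{N}(d)\oplus T_{2}(d)$: since $L$ is homotopy equivalent to a standard lens space its normal invariants are the classical ones, independent of $\alpha$ (see \cite{LdM(1971),Macko-Wegner(2009)}), and for $\pi_{1}(L)=G=\ZZ/2^{K}$ one has $\wsN(L)\cong\bigoplus_{i\geq 1,\,4i\leq 2d-1}H^{4i}(L;\ZZ)\oplus\bigoplus_{i\geq 1,\,4i-2\leq 2d-1}H^{4i-2}(L;\ZZ/2)$ with $H^{4i}(L;\ZZ)\cong\ZZ/2^{K}$ and $H^{4i-2}(L;\ZZ/2)\cong\ZZ/2$; counting the admissible degrees produces exactly $\lfloor (d-1)/2\rfloor$ summands $\ZZ/2^{K}$ and $\lfloor d/2\rfloor$ summands $\ZZ/2$.

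The main tool is a comparison of the surgery exact sequence of the closed manifold $L\times S^{m}$ with that of the manifold with boundary $(L\times D^{m},L\times S^{m-1})$. Because $m\geq 3$, the inclusion of the equator induces an isomorphism $\pi_{1}(L\times S^{m-1})\xra{\cong}\pi_{1}(L\times S^{m})=G$, so both sequences involve the same groups $L^{s}_{*}(\ZZ G)$; and the stable splitting $(L\times S^{m})_{+}\simeq L_{+}\vee\Sigma^{m}L_{+}$ induced by $S^{m}_{+}\simeq S^{0}\vee S^{m}$ splits the normal invariants as $\sN(L\times S^{m})\cong\sN_{\del}(L\times D^{m})\oplus\pr^{*}\wsN(L)$, with $\pr\co L\times S^{m}\to L$ the projection and $\sN_{\del}(L\times D^{m})=[(L\times D^{m})/(L\times S^{m-1}),\G/\TOP]$. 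On the summand $\pr^{*}\wsN(L)$ the surgery obstruction vanishes: such a class is represented by $g\times\id_{S^{m}}$ for a normal map $g\co W\to L$, and Ranicki's product formula gives obstruction $\theta_{L}(g)\otimes\sigma^{*}(S^{m})=0$, since the symmetric signature $\sigma^{*}(S^{m})\in L^{m}(\ZZ)$ vanishes for $m\geq 1$. A diagram chase, following the treatment of $N=2$ in \cite{Madsen-Rothenberg(1989)}, then presents the surgery exact sequence of $L\times S^{m}$ as the direct sum of that of $(L\times D^{m},\del)$ and the zero-assembly sequence on the $\pr^{*}\wsN(L)$-summand, yielding $\sS^{s}(L\times S^{m})\cong\sS^{s}_{\del}(L\times D^{m})\oplus\wsN(L)$.

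Finally I would match this with the geometric maps of the statement. Writing $S^{m}=D^{m}\cup_{L\times S^{m-1}}D^{m}$, the map $\red_{\del}$ sends a structure $f\co M\to L\times S^{m}$, made transverse to the equator and split there, to the restriction over a hemisphere; this is well defined because the codimension-one splitting obstruction lies in a relative $L$-group that vanishes in the present $\pi$--$\pi$ configuration (trivial normal bundle of $L\times S^{m-1}$, all fundamental groups equal to $G$), together with the $s$-cobordism theorem for concordance invariance. The section $\glue$ glues a rel-$\del$ structure on $L\times D^{m}$ to the identity on the other hemisphere, and $\red_{\del}\circ\glue=\id$. On the complementary summand, $\br'$ and $\br''$ are the splitting invariants of $f$ along the standard sub-fake-lens-spaces $L^{2j-1}\times\{\textup{pt}\}\subset L^{2d-1}\times S^{m}$, analogous to the invariants $\br$ and $\bbr$ in Theorem~\ref{thm:main-thm}, and the classical normal invariant computation shows $(\br',\br'')$ restricts to an isomorphism from this summand onto $T_{N}(d)\oplus T_{2}(d)$.

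The step I expect to be the main obstacle is this last piece of bookkeeping: checking that the surgery exact sequence splits as an honest direct sum and not merely that the obstruction vanishes on one summand, that the algebraic projection coincides with the geometrically defined $\red_{\del}$, and that $\br'$ and $\br''$ detect the correct cyclic summands in each of the parity cases of $(d,m)$. The reduced $\rho$-invariant for manifolds with boundary developed in Section~\ref{sec:the-rho-invariant}, together with the properties of $\wrho_{\del}$, supplies the control over the $2$-primary summands required to carry this out.
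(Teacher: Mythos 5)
Your proposal is correct and lands on the same hemisphere decomposition the paper uses, but it re-derives, as if it were new, the preliminary isomorphism the paper already established in Section~\ref{sec:higher-str-sets} as equation~\eqref{eqn:higher-str-set-X-times-S}. The paper's own proof of the corollary is a one-liner: apply $(\red_\del, \res_\pitchfork)$ from~\eqref{eqn:higher-str-set-X-times-S} together with the $\pi$--$\pi$ identification $\sS^s(X \times D^m, X \times S^{m-1}) \cong \sN(X)$ from~\eqref{eqn:str-set-rel-not-rel-bdry}, then read off $\sN(L) \cong T_N(d) \oplus T_2(d)$ from the known normal-invariant computation for lens spaces. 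Where you genuinely differ is in how the splitting is justified. The paper shows $\res_\pitchfork$ is onto by doubling rel-boundary structures and identifies the complementary factor with $\sN(L)$ via the $\pi$--$\pi$-theorem, whereas you split $\sN(L \times S^m)\cong\sN_\del(L \times D^m)\oplus\sN(L)$ from the stable wedge decomposition of $S^m_+$ and kill the surgery obstruction on the $\sN(L)$-factor via Ranicki's product formula and $\sigma^*(S^m)=0$. Both routes work; since $\eta$ is a bijection on $\sS(X\times D^m,X\times S^{m-1})$ under $\pi$--$\pi$, your product-formula vanishing is in effect an algebraic substitute for the geometric doubling argument. Two small points. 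First, your closing remark that the rel-boundary $\rho$-invariant is needed to control the complementary summand is not the case: $\wrho_\del$ enters only in computing $\sS^s_\del(L\times D^m)$ itself (Theorem~\ref{thm:main-thm}), while the $T_N(d)\oplus T_2(d)$ factor is a purely cohomological computation of $\sN(L)$. Second, since $\sS^s(L\times S^m)$ is only a pointed set (not a group), the ``direct sum of surgery exact sequences'' phrasing should be read as the bijection $(\red_\del,\res_\pitchfork)$ with the section provided by the gluing map; the paper handles this by constructing the section geometrically and identifying the kernel of $\res_\pitchfork$, which is worth making explicit in your diagram chase.
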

The map $\red_\del$ is explained in Section~\ref{sec:higher-str-sets} and the invariants $\br',\br''$ in Section~\ref{sec:calculations}. Together with Theorem \ref{thm:main-thm} this shows that $\sS^{s} (L \times S^{m})$ is calculated by the invariants $\wrho_{\del},\bbr_{0},\bbr,\br,\br', \br''$ where each symbol has to be appropriately interpreted depending on parity of $d$ and $m$.


\section{Higher structure sets and the long surgery exact sequence}
\label{sec:higher-str-sets}


We review basic definitions and properties of the higher simple structure sets from surgery theory which we use. More detailed and more comprehensive information can be found in \cite{Wall(1999)}, \cite{Ranicki(2002)}, \cite{Crowley-Lueck-Macko(2019)}, \cite{Kirby-Siebenmann(1977)}, \cite{Quinn(1970)}, \cite{Madsen-Rothenberg(1989)}, \cite{Weiss-Williams(2001)}.

Let~$X$ be a closed~$n$-dimensional topological manifold. The {\it simple structure set}~$\sS^{s} (X)$ of~$X$ in the sense of surgery theory is defined to be the set of equivalence classes of simple homotopy equivalences~$h \co M \ra X$, with the source an~$n$-dimensional closed manifold, modulo homeomorphism up to homotopy in the source. Knowledge of~$\sS^{s} (X)$ is generally regarded as understanding manifolds in the simple homotopy type of~$X$. Many calculations are known, e.g. for $X = S^n$, $\RR P^n$, $\CC P^n$, $T^n = (S^{1})^{\times n}$, lens spaces, see e.g. \cite{Wall(1999)}.

Let~$Y$ be a compact~$n$-dimensional manifold with (a possibly empty) boundary. Then the simple structure set $\sS^{s}_{\del} (Y)$ is defined to be the set of equivalence classes of simple homotopy equivalences~$h \co (M,\del M) \ra (Y,\del Y)$, with the source an~$n$-dimensional compact manifold with boundary and whose restriction~$h| \co \del M \ra \del Y$ is a homeomorphism, modulo homeomorphism up to homotopy relative boundary in the source. We regard knowledge of~$\sS^{s}_{\del} (Y)$ as understanding manifolds in the simple homotopy type of~$Y$ relative to~$\del Y$.

If $X$ is closed we can take for any $m \geq 1$ the compact manifold with boundary $Y = X \times D^{m}$ and consider $\sS^{s}_{\del} (X \times D^{m})$. It turns out that these simple structure sets form a group, where the group operation is obtained geometrically by ``stacking''. In fact there is a space sometimes denoted $\sStw^{s} (X)$ whose $m$-th homotopy group is~$\sS^{s}_{\del} (X \times D^{m})$, \cite{Quinn(1970)} (this includes the case $m=0$). Therefore we sometimes call the simple structure sets of $X \times D^m$ {\it higher simple structure sets} of $X$.

The space~$\sStw^{s} (X)$ is closely related to automorphism spaces of~$X$ and hence its knowledge not only tells us about the manifolds in the homotopy type of $X \times D^{m}$ relative $X \times S^{m-1}$, but it also possibly tells us something about the space of self-homeomorphisms of $X$ (see \cite{Weiss-Williams(2001)} for more details).

On the other hand, given $X$, we might be interested in the simple structure sets of closed manifolds $X \times S^{m}$ for some $m \geq 1$. If $m \geq 3$ then transversality, restriction and the $\pi-\pi$-theorem of \cite[Chapter 4]{Wall(1999)} provide us with a map denoted $\res_{\pitchfork} \co \sS^{s} (X \times S^m) \ra \sS^{s} (X \times D^{m},X \times S^{m-1})$, where now $\sS^{s} (Y,\del Y)$ is yet another version of the structure set where we do not require homeomorphism on the boundary. When $Y = X \times D^{m}$ with $m \geq 3$, the set $\sS^{s} (Y,\del Y)$ is well calculable, again by the $\pi-\pi$-theorem as we explain below, see ~\eqref{eqn:str-set-rel-not-rel-bdry}. The above map is surjective by taking a double and the kernel is $\sS^{s}_{\del} (X \times D^{m})$ so that we have
\begin{equation} \label{eqn:higher-str-set-X-times-S}
	(\red_\del,\res_{\pitchfork}) \co \sS^{s} (X \times S^m) \cong \sS^{s}_{\del} (X \times D^{m}) \times \sS^{s} (X \times D^{m},X \times S^{m-1})
\end{equation}
and hence any knowledge of the higher structure sets also tells us about~$\sS^{s} (X \times S^{m})$. 

Thanks to the $s$-cobordism theorem elements in $\sS^{s}_{\del} (X \times D^{m})$ can also be represented by simple homotopy equivalences $h \co X \times D^{m} \ra X \times D^{m}$ so that the restriction of $h$ to the product of $X$ with the lower hemisphere of the boundary $S^{m-1}$ is the identity and the restriction of $h$ to the product of $X$ with the upper hemisphere is some homeomorphism (which a-priori does not commute with the projection to that hemisphere), see \cite{Madsen-Rothenberg(1989)}. Hence the source manifold is fixed in this description which may be of advantage in some constructions. 

There are versions of the above concepts where the word simple is dropped, but we will not use them in the present paper. Of course. if the corresponding Whitehead group vanishes there is no difference, see~\cite[Chapters 1,2]{Crowley-Lueck-Macko(2019)}. Since this is the case in the simply-connected situation we often leave out the word simple when dealing with such manifolds.

The main tool for computing $\sS^{s}_{\del} (X \times D^{m})$ for a given $a$-dimensional manifold $X$ with $G = \pi_1 (X)$ and $m \geq 0$, so that the dimension of $X \times D^{m}$ is $n = a+m \geq 5$, is the long surgery exact sequence: 
\begin{equation} \label{eqn:surgery-exact-sequence}
\begin{split}
	\cdots \xra{\eta} \sN_\partial (X \times D^{m+1}) \xra{\theta}
	L^s_{n+1} (\ZZ G) & \xra{\partial} \\ \quad \quad \xra{\partial} \sS_\del^s (X \times D^{m})
	\xra{\eta} \sN_\partial (X \times D^{m}) & \xra{\theta} L^s_{n}
	(\ZZ G) \xra{\del} \cdots.
\end{split}
\end{equation}
For detailed explanations of the terms we refer the reader to \cite[Chapter 10]{Wall(1999)} or \cite[Chapter 10]{Crowley-Lueck-Macko(2019)} and \cite{Kirby-Siebenmann(1977)} in the topological case. Here we only review the facts that are essential in this paper.

The sequence \eqref{eqn:surgery-exact-sequence} is a long exact sequence of abelian groups, which is a geometric fact for the terms with $m \geq 2$ and for smaller $m$ it is shown using algebraic theory of surgery of Ranicki \cite{Ranicki(1992)}. The $L$-groups are $4$-periodic in $n$ and can be defined algebraically using quadratic forms. They are functorial in $G$ and using the functoriality it is convenient to denote 
\begin{equation} \label{eqn:reduced_L-group}
	L_{n}^{s} (\ZZ G) \cong L_{n} (\ZZ) \oplus \widetilde{L}_{n}^{s} (\ZZ G).
\end{equation}
The normal invariants \cite[Chapter 6]{Crowley-Lueck-Macko(2019)} are a generalized cohomology theory
\begin{equation} \label{eqn:normal-invariants-general-formula}
	\begin{split}
		\sN_\partial (X \times D^{m}) \cong [X \times D^{m},X \times S^{m-1} ; \G/\TOP,\ast] & \cong \\ 
		\cong H^{0} (X \times D^{m},X \times S^{m-1} ; \bL_{\bullet} \langle 1 \rangle) \cong & H_{n} (X ; \bL_{\bullet} \langle 1 \rangle)
	\end{split}
\end{equation}
whose coefficients spectrum~$\bL_{\bullet} \langle 1 \rangle$ is well understood, its associated infinite loop space is the well known space $\G/\TOP$ with homotopy groups
\begin{equation} \label{eqn:htpy-grps-g-mod-top}
	\pi_{n} (\G/\TOP) \cong L_{n} (\ZZ) \cong \ZZ, 0, \ZZ/2, 0 \quad \textup{for} \; n \equiv 0,1,2,3 \; (\mod \; 4), \; n \geq 1.
\end{equation}
Its homotopy type is recalled in \eqref{eqn:htpy-type-of-g-top}. In particular, it also possesses an almost $4$-periodicity. 

Elements in $\sN_{\del} (X \times D^{m})$ can be represented by degree one normal maps of the form $(f,\ol f) \co (M,\del M) \ra (X \times D^{m},X \times S^{m-1})$ where the restriction of $f$ to $\del M$ is a homeomorphism. The map $\theta \co \sN_\partial (X \times D^{m}) \ra L_{n} (\ZZ G)$ is called the surgery obstruction map. The summand $L_{n} (\ZZ)$ is always hit by this map thanks to the existence of the Milnor and Kervaire manifolds \cite{Madsen-Milgram(1979)}.

In fact, we will need a more detailed description of the surgery obstruction map $\theta \co \sN (X) \ra L_{n} (\ZZ)$ in the case $X$ is closed with $\pi_{1} (X) = \{ 1\}$ and with dimension $n=4i$. Let $(f,\ol f) \co M \ra X$ be a degree one normal map representing an element in $\sN (X)$ for such an $X$. These data contain in particular the bundle map $\ol f \co \tau_M \ra \xi$ from the stable tangent microbundle $\tau_M$ to some stable microbundle $\xi$ over $X$.\footnote{The microbundles are used here since we are in the topological category, see~\cite{Kirby-Siebenmann(1977)}} Then under the identification $L_{4i} (\ZZ) \cong \ZZ$ of \eqref{eqn:htpy-grps-g-mod-top} the surgery obstruction is the difference of signatures divided by $8$
\begin{equation} \label{eqn:surgery-obstruction-is-difference-of-signatures}
	\theta (f, \ol f) = 1/8 \cdot (\sign (M) - \sign (X)) = 1/8 \cdot (\ell (M)[M] - \ell (X)[X]) \in \ZZ,	
\end{equation}
where $\ell$ denotes the total Hirzebruch $\ell$-class which is a rational linear combination of the rational Pontrjagin classes of the tangent microbundle, which are well defined for topological manifolds due to the homotopy equivalence of the rationalized classifying spaces $\BO_{\QQ} \simeq \BTOP_{\QQ}$, \cite{Kirby-Siebenmann(1977)}. In general we have the class $\ell (\xi) \in H^{4 \ast} (X;\QQ)$ as $\ell (\xi)= \sum_i \ell_{i} (\xi)$ with components $\ell_{i} (\xi) \in H^{4i} (X;\QQ)$ for any stable topological microbundle $\xi$ over $X$. Here we use the notation as in \cite[13B]{Wall(1999)}, which is in the PL-case, the use in the topological case is again justified by \cite{Kirby-Siebenmann(1977)}.

Denote by $\hat{f} \co X \ra \G/\TOP$ the map corresponding to the degree one normal map $(f,\ol f) \co M \ra X$ under the bijection $\sN (X) \cong [X,\G/\TOP]$ of \eqref{eqn:normal-invariants-general-formula}. According to \cite[Chapter 13B, page 188]{Wall(1999)} there exists a characteristic class $\ell (\G/\TOP) \in H^{4\ast} (\G/\TOP;\QQ)$ such that 
\begin{equation} \label{eqn:char-class-formula-surgery-obstr-1-ctd-case-dim-4i}
	\theta (f, \ol f) = (\ell (X) \cdot \hat{f}^{\ast} \ell (\G/\TOP)) \; [X] \in L_{4i} (\ZZ) = \ZZ.
\end{equation}

The equations \eqref{eqn:surgery-obstruction-is-difference-of-signatures} and \eqref{eqn:char-class-formula-surgery-obstr-1-ctd-case-dim-4i} together give a relationship between the surgery obstruction and the coefficients of the $\ell (M)$ or equivalently the coefficients of $\ell (\xi)$ since $\tau_M \cong f^{\ast} \xi$. Note that $\ell (\xi)$ a-priori differs from $\ell (X)$ and their difference can be used to calculate $\theta (f,\ol f)$. In fact as explained in \cite[page 210]{Davis(2000)} in a slightly different notation we have that 
\begin{equation} \label{eqn:L-of-xi-versus-L-of-G-mod-TOP}
	\ell (\xi) = (8 \cdot \hat{f}^{\ast} \ell (\G/\TOP)+1) \cdot \ell (X),
\end{equation}
which gives $\ell (\xi)$ as a function of $\hat f$, a fact which will be used in Section \ref{sec:higher-str-set-for-cp}. We also have
\begin{equation} \label{eqn:}
	\theta (f, \ol f) = (1/8) \cdot (\ell (\xi) - \ell (X)) [X] \in \ZZ.
\end{equation} 

Coming  back to the case of $\sS (X \times D^{m},X \times S^{m-1})$ for $m \geq 3$ we note that the $\pi-\pi$-theorem of \cite[Ch 4]{Wall(1999)} and the homotopy invariance of normal invariants tell us that 
\begin{equation} \label{eqn:str-set-rel-not-rel-bdry}
	\sS (X \times D^{m},X \times S^{m-1}) \cong \sN (X \times D^{m},X \times S^{m-1}) \cong \sN (X).
\end{equation}

The almost $4$-periodicity for normal invariants and $L$-groups has as a consequence an almost $4$-periodicity for higher structure sets. This was established by Siebenmann \cite{Kirby-Siebenmann(1977)} and the precise statement is that for any compact manifold $X$ with boundary $\del X$ which might be empty we have an exact sequence of abelian groups
\begin{equation} \label{eqn:siebenmann-periodicity}
	0 \ra \sS^{s}_{\del} (X) \xra{CW} \sS^{s}_{\del} (X \times D^{4}) \xra{t} \ZZ.
\end{equation} 
The map $t$ is the zero map if $\del X \neq \emptyset$. The map $CW$ was a zig-zag of maps in the original source, but Cappell and Weinberger provided us in \cite{Cappell-Weinberger(1985)} with a geometric description, see also \cite{Crowley-Macko(2011)}. 

This leaves us with a smaller number of cases to calculate, namely those of $L \times D^m$ for $m=0,\ldots,7$. We do this, but as we will see our calculations also turn out to be independent of this periodicity result, which is perhaps also an interesting point. The periodicity is also related to the $\rho$-invariant from Section~\ref{sec:the-rho-invariant} as we discuss there.

Next we discuss some properties of the higher simple structure sets which hold specifically for manifolds we are interested in.

We start with the join construction. The main idea is that if a group $G$ (in our case $G \leq S^{1}$) acts freely on spheres $S^{a}$ and $S^{b}$ then the natural extension of this action on the join $S^{a+b+1} = S^{a} \ast S^{b}$ is also free. For the corresponding operation on fake lens spaces we will use notation $L(\alpha \ast \beta) = L (\alpha) \ast L (\beta)$. This operation gives certain maps between simple structure sets of lens spaces of different dimensions with the same fundamental group \cite{Wall(1999)}. Madsen and Rothenberg noticed that this construction can be modified to obtain also maps between higher simple structure sets. The construction appears at the end of paragraph 2 in \cite{Madsen-Rothenberg(1989)}, see also \cite{Macko(2007)}. It may be described as an iterated cone construction. Note that the join may be seen as a union of cones and this is one idea in generalizing Wall construction to the iterated construction of \cite{Madsen-Rothenberg(1989)}. If $L (\alpha_1)$ is the standard $1$-dimensional lens space we obtain in this way maps between higher simple structure sets which we call the {\it suspension maps} and we denote them
\begin{equation} \label{eqn:suspension-higher-str-sets}
	\Sigma \co \sS^{s}_{\del} (L^{2d-1} (\alpha) \times D^{m}) \ra \sS^{s}_{\del} (L^{2d+1} (\alpha \ast \alpha_1) \times D^{m}).
\end{equation}
An analogous map exists also for the complex projective spaces (that means when $G = S^{1}$).

Another piece of structure is functoriality with respect to restricting the group actions, which allows us to map between the higher simple structure sets of fake lens spaces of the same dimension but with different fundamental groups and also to map the higher simple structure sets of complex projective spaces to the higher simple structure sets of fake lens spaces. Given $H < G \leq S^1$ restricting action provides us with fiber bundles
\begin{equation} \label{eqn:transfer-from-G-to-H}
	p_H^G \co L(\alpha|_{H}) \lra L(\alpha),
\end{equation}
which induce the vertical ``transfer'' maps in the following diagram
\begin{equation} \label{eqn:transfer-str-sets-and-ni}
	\begin{split} 
		\xymatrix{
		 \sS^s_{\del} (L(\alpha) \times D^{m}) \ar[r]^{\eta} \ar[d]_{(p_H^G)^{!}} & \sN_{\del} (L(\alpha)  \times D^{m}) \ar[d]^{(p_H^G)^{!}} \\
		 \sS^s_{\del} (L(\alpha|_{H})  \times D^{m}) \ar[r]^{\eta} & \sN_{\del} (L(\alpha|_{H}) \times D^{m}).
		}
	\end{split}
\end{equation}


\section{The long surgery exact sequence for a complex projective space} \label{sec:higher-str-set-for-cp}


For our calculation we also need to deal with a version of our problem for the complex projective spaces. This case is easier due to the triviality of the fundamental group, but at the same time it will illustrate the strategy which we will employ later. We assume $d \geq 2$ and $k \geq 1$ from now on. 

The connection with lens spaces is that for $H=\ZZ/N$ and $G = S^{1}$ and $\alpha_1$ the standard action of $H$ on $S^{2d-1}$ the map $p_H^G \co L^{2d-1} (\alpha_1) \lra \CC P^{d-1}$ from \eqref{eqn:transfer-from-G-to-H} gives via Diagram~\eqref{eqn:transfer-str-sets-and-ni} maps from the higher structure sets and normal invariants of $\CC P^{d-1}$ to the higher structure sets and normal invariants of $L^{2d-1} (\alpha_1)$.

The complex projective space $\CC P^{d-1}$ can be viewed as the quotient of the diagonal $S^1$-action on $S^{2d-1} = S^1 \ast \cdots
\ast S^1$ ($d$-factors). As a real manifold it has dimension $2d-2$ and $\pi_1 (\CC P^{d-1}) = 1$. Hence from (\ref{eqn:surgery-exact-sequence}) we have that for $n-1 = 2d-2+m$ the long surgery exact sequence for $\CC P^{d-1}$ splits into the short exact sequences
\begin{equation} \label{ses-cp^d-1}
0 \ra \sS_\del (\CC P^{d-1} \times D^m) \ra \sN_{\del} (\CC P^{d-1} \times D^m) \xra{\theta}
L_{n-1}(\ZZ) \ra 0,
\end{equation}
since in the simply connected case the map $\theta$ is always surjective \cite{Madsen-Milgram(1979)}. The last term in \eqref{ses-cp^d-1} is $0$ if $n-1$ is odd, so it is convenient to split the discussion into two cases, namely when $m$ is even and odd. For the normal invariants we have from \eqref{eqn:normal-invariants-general-formula} and \eqref{eqn:htpy-grps-g-mod-top} and using the Atiyah-Hirzberuch spectral sequence that
\begin{equation}
	\begin{split} \label{eqn:normal-invariants-cp^d-1}
	\sN_\del (\CC P^{d-1} \times D^{2k+1}) \cong & \quad 0, \\
	\sN_\del (\CC P^{d-1} \times D^{2k}) \cong & \bigoplus_{i=1}^{\infty}
	H^{4i} (\CC P^{d-1}_+ \wedge S^{2k} ;\ZZ) \oplus \\ 
	& \bigoplus_{i=1}^{\infty} H^{4i-2} (\CC P^{d-1}_+ \wedge S^{2k};\ZZ/2),
	\end{split}
\end{equation}
where of course all but a finite number of summands are zero. Further we can identify the factors
\begin{align}
\bs_{4i} & \co \sN_{\del}(\CC P^{d-1} \times D^{2k}) \ra H^{4i} (\CC P^{d-1}_+ \wedge S^{2k};\ZZ) \cong \ZZ \cong L_{4i} (\ZZ) \\
\bs_{4i-2} & \co \sN_{\del}(\CC P^{d-1} \times D^{2k}) \ra H^{4i-2} (\CC P^{d-1}_+ \wedge S^{2k};\ZZ_2)
\cong \ZZ/2 \cong L_{4i-2} (\ZZ)
\end{align}
as surgery obstructions of degree one normal maps obtained from an element $(f,\overline{f}) \co M \ra \CC P^{d-1} \times D^{2k}$ of $\sN_{\del}(\CC P^{d-1} \times D^{2k})$ by first making $f$ transverse to $\CC P^{j}  \times D^{2k}$ and then taking the surgery obstruction of the degree one map obtained by restricting to the preimage of $\CC P^{j}  \times D^{2k}$. Here $j = 2i-k$ when we want $\bs_{4i}$ and $j = 2i-k-1$ when we want $\bs_{4i-2}$. The maps $\bs_{2i}$ are called the {\it splitting invariants}. This description is obtained analogously to \cite[14C]{Wall(1999)} building on \cite[13B]{Wall(1999)} which in turn builds on \cite{Sullivan(1996)}. We will sometimes use (\ref{eqn:normal-invariants-cp^d-1}) to identify the elements of $\sN_{\del} (\CC P^{d-1} \times D^{2k})$ by $s = (s_{2i})_i$.

The surgery obstruction map $\theta$ takes the top summand of $\sN_{\del} (\CC P^{d-1} \times D^{2k})$ isomorphically onto $L_{2d-2+2k} (\ZZ)$. Hence the short exact sequence (\ref{ses-cp^d-1}) splits and we obtain a bijection of $\sS_{\del} (\CC P^{d-1} \times D^{2k})$ given by the splitting invariants $\bs_{2i}$ for $k \leq i \leq k+d-2$:
\begin{equation} \label{eqn:ss-cp^d-1}
\bigoplus_{k \leq i \leq k+d-2} \bs_{2i} \co \sS_{\del} (\CC P^{d-1} \times D^{2k}) \xra{\cong} \bigoplus_{k \leq i \leq k+d-2} L_{2i} (\ZZ).
\end{equation}
Notice that if we compare $\sS (\CC P^{d-1})$ with $\sS_{\del} (\CC P^{d-1} \times D^{2k})$ we have one more summand corresponding to $\bs_{2k}$ which in case $k=2l$ corresponds to the extra $\ZZ$-summand in  \eqref{eqn:siebenmann-periodicity}.


Later we will need to identify the indexes $i$ of the splitting invariants $\bs_{4i}$ in the above isomorphisms \eqref{eqn:normal-invariants-cp^d-1} and \eqref{eqn:ss-cp^d-1}. They depend on the parity of $d$ and $k$, so to this end we introduce the following notation, where $I_{4}^{N} (d,k)$ is the indexing set of the normal invariants with indexes divisible by $4$ and $I_{4}^{S} (d,k)$ is the indexing set of the higher  structure set with indexes divisible by $4$ in both cases for $k \geq 1$. Note that the dimension of the manifolds involved is $2d-2+2k$.  

The set $I_{4}^{N} (d,k)$ is defined as the set of $i \in \ZZ$ such that

\

\begin{center}
\begin{tabular}{ |c|c|c| } 
 \hline
 $I_{4}^{N} (d,k)$ & $k = 2l$ & $k = 2l+1$ \\ 
 \hline
 $d = 2e$ & $l \leq i \leq e+l-1$ & $l+1 \leq i \leq e+l$ \\ 
 \hline 
 $d = 2e+1$ & $l \leq i \leq e+l$ & $\quad l+1 \leq i \leq e+l$ \\ 
 \hline
\end{tabular}
\end{center}

\

The set $I_{4}^{S} (d,k)$ is defined as the set of $i \in \ZZ$ such that

\

\begin{center}
\begin{tabular}{ |c|c|c| } 
 \hline
 $I_{4}^{S} (d,k)$ & $k = 2l$ & $k = 2l+1$ \\ 
 \hline
 $d = 2e$ & $l \leq i \leq e+l-1$ & $l+1 \leq i \leq e+l-1$ \\ 
 \hline 
 $d = 2e+1$ & $l \leq i \leq e+l-1$ & $\quad l+1 \leq i \leq e+l$ \\ 
 \hline
\end{tabular}
\end{center}

\ 

Hence, for example, for the free part we have
\begin{equation} \label{eqn:free-part-ss-cp^d-1}
\bigoplus_{i \in I_{4}^{S} (d,k)} \bs_{4i} \co \textup{Free} \; \sS_{\del} (\CC P^{d-1} \times D^{2k}) \xra{\cong} \bigoplus_{i \in I_{4}^{S} (d,k)} L_{4i} (\ZZ) \cong \bigoplus_{i \in I_{4}^{S} (d,k)} \ZZ.
\end{equation}

Similarly it is possible to identify the indexing sets $I^{N}_{2} (d,k)$ and $I^{S}_{2} (d,k)$ of the $\ZZ/2$-summands.

When studying the $\rho$-invariant later in Section~\ref{sec:the-rho-invariant} we will also need to understand the structure sets of the closed manifolds $\CC P^{d-1} \times S^{2k}$ to some extent. Note that we have a map 
\begin{align}
	\begin{split} \label{eqn:cp-times-disk-to-cp-times-sphere}
		\glue \co \sS_{\del} (\CC P^{d-1} \times D^{2k}) & \ra \sS (\CC P^{d-1} \times S^{2k}) \\
		[h \co Q \ra \CC P^{d-1} \times D^{2k}] & \mapsto [\glueh \co Q(h) = Q \cup_{\del h} \CC P^{d-1} \times D^{2k} \ra \CC P^{d-1} \times S^{2k}] 
	\end{split} 
\end{align}
where $\glueh$ is the obvious map. An analysis analogous to the first part of this section shows that the structure set $\sS (\CC P^{d-1} \times S^{2k})$ is isomorphic to a sum of several copies of $\ZZ$ and $\ZZ/2$ via a set of splitting invariants $\bs_{4j,0}$ and $\bs_{4i,1}$ along submanifolds
\begin{equation} \label{eqn:splitting-invariants-for-cp-times-sphere}
	\CC P^{2j} \times \{ \ast \} \; \textup{for} \; \bs_{4j,0} \quad \textup{and} \quad \CC P^{2i-k} \times S^{2k} \; \textup{for} \; \bs_{4i,1}.
\end{equation}
The map ``$\glue$'' sends $\bs_{4i}$ to $\bs_{4i,1}$ and hence the elements in its image have $\bs_{4j,0} = 0$. 

What we will need in Section~\ref{sec:the-rho-invariant} is a relationship between the splitting invariants $\bs_{4i}$ and the $\ell$-class of $Q(h)$. This will be obtained in an analogous way as by Wall in \cite[14C]{Wall(1999)} for the case of $X = \CC P^{d-1}$ but we have to make a couple of adjustments. Recall that we have
	\[
		H^{\ast} (\CC P^{d-1} \times S^{2k};\ZZ) \cong \ZZ [x,y]/(x^d,y^2) \quad |x| = 2, \; |y| = 2k.
	\]
Setting $\bar x = (\glueh)^{\ast} (x)$ and $\bar y = (\glueh)^{\ast} (y)$ we obtain the isomorphism
	\[
		H^{\ast} (Q(h);\ZZ) \cong \ZZ [\bar x, \bar y]/({\bar x}^d, {\bar y}^2 ) \quad |\bar x| = 2, \; |\bar y| = 2k.
	\]
Let
	\[
		\ell (Q(h)) = \sum_{\substack{u = 0, \ldots, d-1 \\ v = 0,1}} \alpha_{u,v} {\bar x}^{u} {\bar y}^{v}
	\]
and remember that $\alpha_{u,v} = 0$ if $u+k \cdot v$ is odd. If $(\glueh,\ol \glueh) \co Q(h) \ra \CC P^{d-1} \times S^{2k}$ is the associated degree one normal map with the bundle map $\ol \glueh \co \tau_{Q(h)} \ra \xi $ this means that 
	\[
		\ell (\xi) = \sum_{\substack{u = 0, \ldots, d-1 \\ v = 0,1}} \alpha_{u,v} {x}^{u} {y}^{v}.
	\] 
Analogously let
	\[
		\hat{\glueh}^{\ast} \ell (\G/\TOP) = \sum_{\substack{u = 0, \ldots, d-1 \\ v = 0,1}} \beta_{u,v} {x}^{u} {y}^{v}.
	\]
and remember that $\beta_{u,v} = 0$ if $u+k \cdot v$ is odd. We know from \eqref{eqn:L-of-xi-versus-L-of-G-mod-TOP} that
\[
	\ell (\xi) = (8 \cdot \hat{\glueh}^{\ast} \ell (\G/\TOP)+1) \cdot \ell (\CC P^{d-1} \times S^{2k}).
\] 
We want to show that the coefficients $\alpha_{u,v}$ are linear in $\bs_{4i}$. Hence it is enough to show that $\beta_{u,v}$ are linear in $\bs_{4i}$. To this end we recall that the map ``glue'' maps the splitting invariants $\bs_{4i}$ to $\bs_{4i,1}$ and these are surgery obstructions of the restrictions of $\glueh$ to the preimage $W_{2i}$ of $\CC P^{2i-k} \times S^{2k}$. Now using formula \eqref{eqn:char-class-formula-surgery-obstr-1-ctd-case-dim-4i} we get 
\begin{equation} \label{eqn:splitting-invariants-vs-pullback-of-l-G-TOP-1}
	8 \cdot \bs_{4i} (h) = 8 \cdot \bs_{4i,1} (\glueh) = (\ell (\CC P^{2i-k} \times S^{2k}) \cdot \hat{\glueh}^{\ast} \ell (\G/\TOP)) [\CC P^{2i-k} \times S^{2k}].
\end{equation}
The splitting invariants $\bs_{4i,0}$ are surgery obstructions of the restrictions of $\glueh$ to the preimage $W_{2i}$ of $\CC P^{2i} \times \{ \ast \}$ and using formula \eqref{eqn:char-class-formula-surgery-obstr-1-ctd-case-dim-4i} we get 
\begin{equation} \label{eqn:splitting-invariants-vs-pullback-of-l-G-TOP-0}
	8 \cdot \bs_{4i,0} (\glueh) = (\ell (\CC P^{2i} \times \{ \ast \}) \cdot \hat{\glueh}^{\ast} \ell (\G/\TOP)) [\CC P^{2i} \times \{ \ast \}].
\end{equation}

These are similar to the equations on the top half of page 203 in \cite[14C]{Wall(1999)} and we proceed analogously making necessary adjustments. Recall that $\ell (S^{2k}) = 1$ and hence we obtain that $\ell (\CC P^{2i-k} \times S^{2k}) = \ell (\CC P^{2i-k})$. Denote
\begin{equation} \label{eqn:l-of-cp-j-k}
	\ell (\CC P^{j} \times S^{2k}) = \sum_{w=0}^{\lfloor j/2 \rfloor} \gamma_{j,w} x^{2w} 
\end{equation}
for appropriate $\gamma_{j,w}$, where we know $\gamma_{j,0} = 1$.

Since $[\CC P^{2i-k} \times S^{2k}]$ is cohomologically dual to $x^{2i-k}y$ the equation \eqref{eqn:splitting-invariants-vs-pullback-of-l-G-TOP-1} gives
\begin{equation}
	8 \cdot \bs_{4i,1} (\glueh) = \sum_{u} \gamma_{2i-k,i-(u+k)/2} \beta_{u,1}.
\end{equation}
by extracting the coefficient of $x^{2i-k}y$ in the product of the cohomology classes. Varying $i$ this gives a system of linear equations and since we know $\gamma_{j,0} = 1$ it has a unique solution and we obtain that $\beta_{u,1}$ are linear in $\bs_{4i,1} (\glueh) = \bs_{4i} (h)$. 

Similarly since $[\CC P^{2i} \times \{ \ast \}]$ is cohomologically dual to $x^{2i}$ the equation \eqref{eqn:splitting-invariants-vs-pullback-of-l-G-TOP-0} gives
\begin{equation}
	8 \cdot \bs_{4i,0} (\glueh) = \sum_{u} \gamma_{2i,i-u/2} \beta_{u,0}.
\end{equation}
by extracting the coefficient of $x^{2i}$ in the product of the cohomology classes. Varying $i$ this gives again a regular system of linear equations, since we know $\gamma_{j,0} = 1$. Because $8 \cdot \bs_{4i,0} (\glueh)=0$ we obtain that $\beta_{u,0}=0$ for all $u$. 

Putting both cases together we have the following proposition. 

\begin{prop} \label{prop:alpha-and-beta-depend-linearly-on-s-4i}
	With the above notation the coefficients $\beta_{u,v}$ and hence also the coefficients $\alpha_{u,v}$ depend linearly on $\bs_{4i}$.
\end{prop}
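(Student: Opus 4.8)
The plan is to assemble the two coefficient-extraction computations carried out just before the statement into one argument, the only real work being to see that the two linear systems involved are genuinely regular and that, once the $\beta_{u,v}$ are controlled, the passage to the $\alpha_{u,v}$ introduces nothing worse than fixed constant terms.

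First I would record the decoupling of the $y$-degree. Since $\ell(S^{2k})=1$ we have $\ell(\CC P^{d-1}\times S^{2k})=\ell(\CC P^{d-1})\in\ZZ[x]$, so in the identity $\ell(\xi)=(8\cdot\hat{\glueh}^{\ast}\ell(\G/\TOP)+1)\cdot\ell(\CC P^{d-1}\times S^{2k})$ multiplication by the second factor preserves the power of $y$; hence the coefficients with $v=0$ and those with $v=1$ can be handled separately, each reducing to a one-variable problem about the polynomials $\sum_u\beta_{u,v}x^u$ and $\sum_w\gamma_{d-1,w}x^{2w}$. For $v=1$: extracting the coefficient of $x^{2i-k}y$ in \eqref{eqn:splitting-invariants-vs-pullback-of-l-G-TOP-1} and using \eqref{eqn:l-of-cp-j-k} gives, as $i$ ranges over the relevant index set, the square system $8\cdot\bs_{4i}(h)=8\cdot\bs_{4i,1}(\glueh)=\sum_u\gamma_{2i-k,\,i-(u+k)/2}\,\beta_{u,1}$ relating the vector $(\beta_{u,1})_u$ to the vector $(\bs_{4i}(h))_i$; its matrix is triangular with all diagonal entries equal to $\gamma_{j,0}=1$, so it is invertible over $\QQ$, and solving expresses each $\beta_{u,1}$ as a fixed $\QQ$-linear combination of the $\bs_{4i}(h)$ with zero constant term. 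For $v=0$: the analogous extraction from \eqref{eqn:splitting-invariants-vs-pullback-of-l-G-TOP-0} gives the regular system $8\cdot\bs_{4i,0}(\glueh)=\sum_u\gamma_{2i,\,i-u/2}\,\beta_{u,0}$, but every element in the image of $\glue$ has $\bs_{4j,0}=0$, so all left-hand sides vanish and regularity forces $\beta_{u,0}=0$. Thus all the $\beta_{u,v}$ are linear in the $\bs_{4i}$.

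Next I would pass from $\beta$ to $\alpha$. Rewriting the identity as $\ell(\xi)-\ell(\CC P^{d-1}\times S^{2k})=8\cdot\hat{\glueh}^{\ast}\ell(\G/\TOP)\cdot\ell(\CC P^{d-1}\times S^{2k})$ and extracting the coefficient of $x^uy^v$, the right-hand side is $8$ times a fixed $\ZZ$-linear combination of the $\beta_{u',v}$ with coefficients the fixed numbers $\gamma_{d-1,w}$, which are independent of $h$. Hence $\alpha_{u,v}$ equals that linear combination of the $\bs_{4i}(h)$ plus the corresponding (constant) coefficient of $\ell(\CC P^{d-1}\times S^{2k})$; in particular $\alpha_{u,1}$ is $\QQ$-linear in the $\bs_{4i}$ with no constant term, while $\alpha_{u,0}$ is just the constant $\gamma_{d-1,u/2}$. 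This is the asserted linear (more precisely affine, as in \cite[14C]{Wall(1999)}) dependence.

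I do not expect a serious obstacle: once the geometry of Section~\ref{sec:higher-str-set-for-cp} is in place everything is linear algebra over $\QQ$. The one point deserving care is verifying that the two systems above are square and regular — this rests precisely on $\gamma_{j,0}=1$ for all $j$ (the degree-zero part of every $\ell$-class is $1$, giving the triangular shape) together with the bookkeeping that the indexing set of the $\bs_{4i}$ matches the range of nonzero $\beta_{u,v}$, which follows from the definitions of $I_4^N(d,k)$ and $I_4^S(d,k)$ and the vanishing $\alpha_{u,v}=\beta_{u,v}=0$ when $u+kv$ is odd.
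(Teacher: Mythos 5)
Your proposal is correct and follows essentially the same route as the paper: the paper's proof of Proposition~\ref{prop:alpha-and-beta-depend-linearly-on-s-4i} is exactly the coefficient-extraction argument preceding the statement, with the two regular linear systems solved via $\gamma_{j,0}=1$ (giving $\beta_{u,1}$ linear in $\bs_{4i}$ and $\beta_{u,0}=0$) and the passage to $\alpha_{u,v}$ through \eqref{eqn:L-of-xi-versus-L-of-G-mod-TOP}. Your remark that the dependence of $\alpha_{u,v}$ is affine (a constant term coming from $\ell(\CC P^{d-1}\times S^{2k})$) is accurate and consistent with the paper's usage, which follows \cite[14C]{Wall(1999)}.
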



\section{The long surgery exact sequence for a lens space} \label{sec:lens-times-disk}


Now we turn to the higher structure sets of fake lens spaces. We note that any fake lens space $L^{2d-1} (\alpha)$ is homotopy equivalent to a lens space $L^{2d-1} (\alpha_{(u,1,\cdots,1)})$ where $\alpha_{(u_1,\cdots,u_d)}$ denotes the linear action of $\ZZ/N$ on $S^{2d-1}$ given by multiplication by $e^{2 u_j \pi i}$ on the $j$-th complex coordinate of $S^{2d-1} = S (\CC^{d})$, see \cite[14E]{Wall(1999)}. Since a homotopy equivalence induces an isomorphism on higher structure sets $\sS^{s}_{\del} (X \times D^{m})$, see \cite{Ranicki(1992)}, \cite{Ranicki(2009)}, it is enough to calculate the higher simple structure sets of $L^{2d-1} (\alpha_{(u_1,1,\cdots,1)})$ for $u_1 = 1, \ldots, N-1$. For simplicity we will work with the case $u_1 = 1$ here. The other cases yield the same results, only the algebra gets a little more complicated and is solved in the same way as in \cite{Macko-Wegner(2009)}. Therefore we will abbreviate from now on $L^{2d-1} = L^{2d-1} (\alpha_{(1,\ldots,1)})$ or simply $L$ if the dimension is clear. Moreover, we assume $N = 2^K$.

We start by summarizing what we already know about the terms in~\eqref{eqn:surgery-exact-sequence} for $X = L^{2d-1}$. The $L$-theory we need is described in the following proposition from \cite{Hambleton-Taylor(2000)}. The symbol $R_{\CC} (G)$ denotes the complex representation ring of a group $G$ and the superscripts $\pm$ denote the $\pm$-eigenspaces with respect to the involution given by complex conjugation. The symbol $\Gsign$ means the $G$-signature and Arf is the Arf invariant.

\begin{thm} \cite{Hambleton-Taylor(2000)} \label{L(G)}
For $G = \ZZ/N$ with $2|N$ we have that
\begin{align*}
L^s_n (\ZZ G) & \cong
\begin{cases}
4 \cdot R_{\CC}^+ (G) & n \equiv 0 \; (\mod 4) \; (\Gsign, \; \mathrm{real}) \\
0 & n \equiv 1 \; (\mod 4) \\
4 \cdot R_{\CC}^- (G) \oplus \ZZ/2 & n \equiv 2 \; (\mod 4) \;
(\Gsign, \; \mathrm{purely}
\; \mathrm{imaginary}, \mathrm{Arf}) \\
\ZZ/2 & n \equiv 3 \; (\mod 4) \; (\mathrm{codimension} \; 1 \;
\mathrm{Arf})
\end{cases} \\
\widetilde L^s_{2k} (\ZZ G) & \cong 4 \cdot \RhG^{(-1)^k} \;
\textit{where} \; \RhG^{(-1)^k} \; \textit{is} \; R_{\CC}^{(-1)^k}
(G) \; \textit{modulo the regular representation.}
\end{align*}
\end{thm}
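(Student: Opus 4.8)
This theorem is due to \cite{Hambleton-Taylor(2000)}; here I outline the strategy by which one would prove it. The starting point is the functorial splitting $L^s_n (\ZZ G) \cong L_n (\ZZ) \oplus \widetilde L^s_n (\ZZ G)$, which reduces the problem to computing the reduced groups $\widetilde L^s_*$ and to identifying the detecting invariants ($G$-signature and the various Arf invariants). I would take the involution on $\ZZ G$ to be the untwisted one $g \mapsto g^{-1}$, corresponding to oriented lens spaces. Since $G = \ZZ/N$ is cyclic, hence hyperelementary, Dress induction yields no simplification and one must work with $\ZZ G$ itself. The overall plan is the classical one: compare $L^s_*(\ZZ G)$ with the $L$-theory of $\QQ G$ and of the $p$-adic completions through localization exact sequences, and feed in the algebraic $K$-theory of $\ZZ[\ZZ/2^K]$ to move between the decorations $p$, $h$ and $s$.

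First I would compute $L^s_*(\QQ G)$. Here $\QQ G \cong \prod_{d \mid N} \QQ(\zeta_d)$, with the involution acting on the $d$-th factor by $\zeta_d \mapsto \zeta_d^{-1}$; for $d \le 2$ this factor is $\QQ$ with trivial involution, and for $d \ge 3$ it is a CM field with maximal totally real subfield $\QQ(\zeta_d)^{+}$. Thus $L^s_*(\QQ G)$ is a product of $L$-groups of quadratic forms over $\QQ$ and of $\epsilon$-hermitian forms over these CM fields, all accessible through the Hasse--Minkowski classification: the archimedean places contribute signatures, and the finite places contribute discriminant- and Arf-type invariants, the only interesting ones being dyadic. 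Collecting the signatures over all factors reproduces exactly the $\Gsign$ homomorphism, landing in $R_{\CC}^{+}(G)$ in degrees $\equiv 0$ and in $R_{\CC}^{-}(G)$ in degrees $\equiv 2 \pmod 4$; the ranks $N/2 + 1$ and $N/2 - 1$ appear as the dimensions of the $(\pm 1)$-eigenspaces of complex conjugation on the character lattice $R_{\CC}(G)$.

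Next I would pass from $\QQ G$ back to $\ZZ G$ by means of the localization exact sequence
\[
	\cdots \ra L^s_n (\ZZ G) \ra L^s_n (\QQ G) \ra L^s_n (\QQ G, \ZZ G) \xra{\del} L^s_{n-1}(\ZZ G) \ra \cdots,
\]
equivalently by the Cartesian square relating $\ZZ G$, a maximal order $\Lambda$ in $\QQ G$ containing $\ZZ G$, and their completions. The relative term is a sum, over the primes, of $L$-groups of torsion modules over the residue rings, computable prime by prime; at odd primes these are detected by the same rank and discriminant invariants already present over $\QQ$, so they do not constrain the free part further and they confine $L^s_*(\ZZ G)$, modulo the multisignature lattice, to $2$-primary torsion. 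To convert the $L^p$ or $L^h$ information obtained this way into the simple groups $L^s$ I would use the Rothenberg exact sequences together with the known computations of $\textup{Wh}(\ZZ/2^K)$, $\widetilde K_0 (\ZZ[\ZZ/2^K])$ and $K_{-1}(\ZZ[\ZZ/2^K])$, which are finite $2$-groups with computable Tate cohomology; this shows that the three decorations differ only in controlled $2$-torsion.

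The hard part is the $2$-adic analysis, i.e. computing $L^s_*$ of $\ZZ_2[\ZZ/2^K]$ and of the dyadic torsion modules explicitly. This is where one pins down: the precise congruences satisfied by the multisignature of a surgery problem, which is exactly the content of the assertion that its image is the sublattice $4 \cdot R_{\CC}^{+}(G)$ (resp. $4 \cdot R_{\CC}^{-}(G)$); the extra $\ZZ/2$ summand in degree $\equiv 2 \pmod 4$, detected by the Arf invariant; and the fact that the odd-degree groups collapse to a single $\ZZ/2$ in degree $\equiv 3 \pmod 4$, detected by the codimension-one Arf invariant (the Arf invariant of the surgery problem restricted to the submanifold dual to the mod $2$ reduction of the generator of $H^1$), and vanish in degree $\equiv 1 \pmod 4$. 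It is precisely here that cyclicity of $G$ is used: there are no dihedral or generalized quaternion subgroups to generate additional summands, and the $\textup{UNil}$-type contributions vanish, so that $2$-adic hermitian-form theory together with the Rothenberg sequences suffice to finish the computation.
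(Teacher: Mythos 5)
The paper does not prove this statement at all: it is quoted verbatim from the cited reference, so there is no internal argument to compare yours against. Your outline is a faithful summary of how the result is actually established in the literature (Wall's Hermitian forms papers, Bak, and the Hambleton--Taylor survey): reduce to the reduced groups via $L_n^s(\ZZ G)\cong L_n(\ZZ)\oplus \widetilde L_n^s(\ZZ G)$, compute $L$-theory of $\QQ G\cong\prod_{d\mid N}\QQ(\zeta_d)$ by Hasse--Minkowski, descend to $\ZZ G$ through the localization sequence/arithmetic square, and control the change of decoration with Rothenberg sequences and the $K$-theory of $\ZZ[\ZZ/2^K]$. Your eigenspace ranks $N/2+1$ and $N/2-1$ for $R_{\CC}^{\pm}(G)$ are correct, and you rightly identify the detecting invariants. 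The only caveat is that, as you acknowledge, the entire content of the theorem --- the divisibility statement that the multisignature image is exactly $4\cdot R_{\CC}^{\pm}(G)$, the vanishing of $L_1^s$, and the single $\ZZ/2$ in degree $3$ --- is concentrated in the dyadic computation that your sketch defers; as an outline of a cited external result this is acceptable, but it is not a self-contained proof. (Minor point: the reference to $\textup{UNil}$ is out of place here, since those obstructions pertain to amalgamated decompositions rather than to finite cyclic groups.)
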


For the normal invariants~$\sN_{\del} (Y) \cong [Y/\del Y;\G/\TOP]$, using localization at $2$ and away from $2$, we have in general the following homotopy pullback square \cite{Madsen-Milgram(1979)}
\begin{equation} \label{eqn:htpy-type-of-g-top}
\begin{split}
	\xymatrix{
	\G/\TOP \ar[r] \ar[d] & \prod_{i > 0} K(\ZZ_{(2)},4i) \times K(\ZZ/2,4i-2) \ar[d] \\
	\BO[1/2] \ar[r] & \BO_{\QQ} \simeq \prod_{i > 0} K(\QQ,4i)
	}
\end{split}
\end{equation}
which induces a Mayer-Vietoris sequence for the homotopy sets of mapping spaces. For the products $Y = L \times D^{m}$ we notice that 
\[
Y/\del Y = L \times D^{m} / L \times S^{m-1} \simeq L_{+} \wedge S^{m} \simeq (L \wedge S^{m}) \vee S^{m}.
\]
Hence we can use the known calculations of both summands to obtain the following calculation
\begin{equation} \label{ni-lens-space-times-disk}
\sN_{\del} (L \times D^{m}) \cong \bigoplus_{i=1}^{\infty} H^{4i} (L_{+} \wedge S^{m};\ZZ) \oplus
\bigoplus_{i=1}^{\infty} H^{4i-2} (L_{+} \wedge S^{m};\ZZ/2). 
\end{equation}
At this point it is useful to split the discussion into the case when $m$ is odd and when $m$ is even.

\nin \textbf{Case $m=2k+1$.} We have
\begin{equation} \label{ni-lens-space-times-odd-disk}
	\sN_{\del} (L \times D^{2k+1}) \cong \bigoplus_{i=1}^{\infty} H^{4i} (L_{+} \wedge S^{2k+1};\ZZ) \oplus
	\bigoplus_{i=1}^{\infty} H^{4i-2} (L_{+} \wedge S^{2k+1};\ZZ/2).
\end{equation}
The first summand is zero except it is $\ZZ$ in two instances, when $(d,k)=(2e,2l)$, so that $2d-1+2k+1 = 4(e+l)$, and when $(d,k)=(2e+1,2l+1)$, so that $2d-1+2k+1=4(e+l+1)$. The other summands become $\ZZ/2$ until we reach the dimension of $2d-1+2k+1$. We denote these summands by $\bt_{4i-2}$ and the indexing set for the $i$ is denoted $J_{2}^{N} (d,k,\textup{odd})$.


\nin \textbf{Case $m=2k$.} This case is basically a shifted copy of the case $k=0$ plus a summand coming from the sphere $S^{2k}$. Similarly as in the complex projective case we introduce notation $\bt_{2i}$ for the invariants given by the respective summands, although in the present case we do not have a simple identification as splitting invariants. Nevertheless we will see that these invariants are closely related to $\bs_{2i}$. Note that we have
\begin{equation} \label{eqn:cohlgy-L-plus-smash-S}
	H^{4i} (L_+ \wedge S^{2k};\ZZ) \cong \begin{cases} \ZZ & k = 2l, i = l \\ \ZZ/{2^K} & 2k < 4i < 2(d+k)-1  \end{cases} 
\end{equation}
and we denote the summands
\begin{align}
\bt_{4i} & \co \sN_{\del} (L \times D^{2k}) \ra H^{4i} (L_+ \wedge S^{2k};\ZZ) \cong \ZZ/{2^K} \; \textup{or} \; \ZZ \\
\bt_{4i-2} & \co \sN_{\del} (L \times D^{2k}) \ra H^{4i-2} (L_+ \wedge S^{2k};\ZZ/2) \cong \ZZ/2.
\end{align}

Similarly as in the case of $\CC P^{d-1}$ it is convenient to introduce the indexing set $J^{N}_{4} (d,k)$ of those $i$ for which the invariants $\bt_{4i}$ are non-zero.

\

\begin{center}
\begin{tabular}{ |c|c|c| } 
 \hline
 $J_{4}^{N} (d,k)$ & $k = 2l$ & $k = 2l+1$ \\ 
 \hline
 $d = 2e$ & $l \leq i \leq e+l-1$ & $l+1 \leq i \leq e+l$ \\ 
 \hline 
 $d = 2e+1$ & $l \leq i \leq e+l$ & $\quad l+1 \leq i \leq e+l$ \\ 
 \hline
\end{tabular}
\end{center}

\

Similarly one could define $J^{N}_{2} (d,k)$. Though elementary, it is also helpful to similarly put into a table the dimension $n = 2d-1+2k$ of $L^{2d-1} \times D^{2k}$ in terms of parity of $d$ and $k$.

\

\begin{center}
\begin{tabular}{ |c|c|c| } 
 \hline
 $n$ & $k = 2l$ & $k = 2l+1$ \\ 
 \hline
 $d = 2e$ & $4(e+l)-1$ & $4(e+l)+1$ \\ 
 \hline 
 $d = 2e+1$ & $4(e+l)+1$ & $ 4(e+l)+3$ \\ 
 \hline
\end{tabular}
\end{center}

\

Coming back to the general case of any $m$ we obtain even more information from the following proposition. 
\begin{prop} \label{prop:theta-for-L-x-D}
\begin{enumerate}
\item If $n=2d-1+2k=4u-1$ then the map
\[
\theta \co \sN_{\del}(L^{2d-1} \times D^{2k}) \ra L^s_{4u-1}(\ZZ G) =
\ZZ/2
\]
is given by $\theta (x) = \bt_{4u-2} (x) \in  \ZZ_2$.
\item
The map
\[
\theta \co \sN_\partial(L^{2d-1} \times D^{2k+1}) \ra L^s_{2d+2k}(\ZZ G)
\]
maps onto the summand $L_{2d+2k}(\ZZ)$.
\end{enumerate}
\end{prop}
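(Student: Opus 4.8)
The plan is to prove the two parts separately, reducing each to the corresponding fact on the sphere summand together with Wall's analysis from \cite[14C-14E]{Wall(1999)}. For part (1), the key observation is that $n = 2d-1+2k = 4u-1$ is odd, and $L^s_{4u-1}(\ZZ G) = \ZZ/2$ is detected by the codimension one Arf invariant. First I would recall the splitting $Y/\del Y = L\times D^{2k}/L\times S^{2k-1} \simeq (L\wedge S^{2k})\vee S^{2k}$ from the previous section, which identifies $\sN_\del(L\times D^{2k})$ with the direct sum of the normal invariants of $L\wedge S^{2k}$ and those of $S^{2k}$. On the $S^{2k}$ summand, $k=2l$ forces an extra $\ZZ$ which maps to $L_{4u}(\ZZ)$ inside $L^s_{4u-1}$; but that summand does not interfere with the target $\ZZ/2$, and the surgery obstruction on the $S^{2k}$ part in degree $4u-1$ is zero because $S^{2k}$ is simply connected and $L_{4u-1}(\ZZ)=0$. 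So the map $\theta$ factors through the normal invariants of $L\wedge S^{2k}$. The top cohomology class here is $H^{4u-2}(L_+\wedge S^{2k};\ZZ/2)\cong\ZZ/2$, the home of $\bt_{4u-2}$, and by the codimension one splitting argument of Wall \cite[14E]{Wall(1999)} — transversality to $L^{2d-2}\times D^{2k}$ inside $L^{2d-1}\times D^{2k}$ and taking the codimension one Arf invariant — the composite $\sN_\del(L\times D^{2k})\xra{\theta}\ZZ/2$ agrees with $\bt_{4u-2}$. The only thing to check is that no lower splitting invariants $\bt_{4i}$ or $\bt_{4i-2}$ with $i<u$ contribute; this follows because those classes lie in $\ZZ/2^K$ or $\ZZ/2$ summands of lower degree and the surgery obstruction map $\theta$ in the relevant degree only sees the top-dimensional information, exactly as in the closed lens space calculation of \cite[14E]{Wall(1999)}, \cite{Macko-Wegner(2009)}.

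For part (2), the target $L^s_{2d+2k}(\ZZ G)$ is an even-dimensional $L$-group, so by \eqref{eqn:reduced_L-group} it contains the summand $L_{2d+2k}(\ZZ)$, which is $\ZZ$ or $\ZZ/2$ according to the residue of $2d+2k$ mod $4$. The plan is to produce a class in $\sN_\del(L\times D^{2k+1})$ hitting a generator of $L_{2d+2k}(\ZZ)$. Here I would use the $S^{2k+1}$ summand together with the fact, recalled after \eqref{eqn:normal-invariants-general-formula}, that the summand $L_n(\ZZ)\subseteq L^s_n(\ZZ G)$ is always in the image of $\theta$ thanks to the Milnor and Kervaire manifolds \cite{Madsen-Milgram(1979)}. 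Concretely, take a generator of the simply connected normal invariants $\sN(S^{2d+2k})$ — realized by a degree one normal map whose surgery obstruction is the generator of $L_{2d+2k}(\ZZ)$ — and pull it back along the collapse map $L^{2d-1}\times D^{2k+1}/L^{2d-1}\times S^{2k}\to S^{2d+2k}$, or equivalently observe that the $S^{2k+1}$-wedge summand $S^{2d-1}\wedge S^{2k+1}\to \ast$ contributes a $\sN(S^{2d+2k})$-factor in \eqref{ni-lens-space-times-disk} whose surgery obstruction under $\theta$ is precisely this generator. Since the composition of $\theta$ with the projection $L^s_{2d+2k}(\ZZ G)\to L_{2d+2k}(\ZZ)$ is functorial and compatible with the corresponding map for the sphere, this shows $\theta$ maps onto $L_{2d+2k}(\ZZ)$.

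The main obstacle I anticipate is part (1): verifying rigorously that $\theta$ equals $\bt_{4u-2}$ on the nose rather than merely being detected by it modulo lower-order terms. The subtlety is that $\sN_\del(L\times D^{2k})$ has many torsion summands ($\bt_{4i}$ valued in $\ZZ/2^K$ and $\bt_{4i-2}$ valued in $\ZZ/2$), and a priori $\theta$ could be a nontrivial linear combination of $\bt_{4u-2}$ with these. Ruling this out requires the naturality of the surgery exact sequence under the inclusion $L^{2d-1}\hookrightarrow L^{2d-1}\times D^{2k}$ (as the zero section, up to homotopy) combined with the vanishing of the codimension one Arf obstruction on the lower skeleta — essentially the same bookkeeping that appears in \cite[14E]{Wall(1999)} and in \cite[Section 4]{Macko-Wegner(2009)}, transplanted to the product with a disk. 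I would handle this by appealing to the Madsen-Rothenberg framework \cite{Madsen-Rothenberg(1989)}, where the codimension one splitting for $L\times D^{2k}$ along $L^{2d-2}\times D^{2k}$ is already set up, so that the identification $\theta = \bt_{4u-2}$ reduces to their codimension one Arf invariant computation together with the observation that $\bt_{4u-2}$ is by definition the top $\ZZ/2$-component in \eqref{ni-lens-space-times-disk}.
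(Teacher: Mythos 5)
Your part (2) is the paper's argument verbatim: the summand $L_{2d+2k}(\ZZ)$ is always hit because of the Milnor and Kervaire manifolds \cite{Madsen-Milgram(1979)}, and nothing more is needed.

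For part (1) you take a genuinely different route. You propose to run the codimension-one splitting analysis of \cite[14E]{Wall(1999)} directly on $L^{2d-1}\times D^{2k}$ for $N=2^K$: make the normal map transverse to $L^{2d-2}\times D^{2k}$, identify $\theta$ with the codimension-one Arf invariant, and then argue that none of the lower summands $\bt_{4i}$, $\bt_{4i-2}$ contribute. The paper instead reduces the whole statement to the case $N=2$, which it takes as known from \cite[Section 4]{Madsen-Rothenberg(1989)}: restricting the action to $\ZZ/2<\ZZ/2^K$ gives the transfer square \eqref{eqn:transfer-str-sets-and-ni}, the transfer is surjective on normal invariants (and, being the induced map in cohomology, an isomorphism on the $\ZZ/2$-summands of \eqref{ni-lens-space-times-disk}), and $\theta$ commutes with it; so the formula for $N=2^K$ is forced by the formula for $N=2$. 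This is precisely the point you single out as your ``main obstacle'': your sketch asserts, but does not prove, that $\theta$ is not a nontrivial linear combination of $\bt_{4u-2}$ with lower-degree invariants, and the appeal to ``$\theta$ only sees top-dimensional information'' is exactly the statement in question rather than a reason for it. The transfer argument is the clean way to discharge it --- any hypothetical lower-order contribution for $N=2^K$ would transfer to a lower-order contribution for $N=2$, contradicting Madsen--Rothenberg --- and it avoids having to redo the product/characteristic-class bookkeeping for the Arf component over $L\wedge S^{2k}$. So: same proof for (2); for (1) your approach is viable in principle but leaves open the one step that matters, and the paper's transfer reduction is both shorter and closes that gap.
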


\begin{proof}
	For part (1) in the case $N=2$ we refer the reader to \cite[Section 4]{Madsen-Rothenberg(1989)}. The case $N=2^K$ is obtained by passing from $G$ to $\ZZ/2$ by restricting the action to obtain the transfer maps \eqref{eqn:transfer-str-sets-and-ni} which are surjective on normal invariants. Since the map $\theta$ is the surgery obstruction map in both cases, if it were trivial for $N = 2^K$, meaning any degree one normal map would be normally cobordant to a simple homotopy equivalence, then the transfer of the normal cobordism would be a normal cobordism for the transferred problem and hence from the surjectivity of the transfer map we would obtain the triviality of the map $\theta$ for $N = 2$ which is a contradiction. 
	
	Part (2) is the already mentioned general statement in topological surgery, due to the existence of the Milnor manifolds and Kervaire manifolds \cite{Madsen-Milgram(1979)}.
\end{proof}
In view of Proposition \ref{prop:theta-for-L-x-D} we denote
\begin{equation}
	\widetilde{\sN}_{\del} (L \times D^{m}) := \ker \theta \co \sN_{\del}(L^{2d-1} \times D^{m}) \ra L^s_{2d-1+m}(\ZZ G)
\end{equation}
and the corresponding indexing sets as $J_{4}^{tN} (d,k)$, $J_{2}^{tN} (d,k)$ and $J_{2}^{tN} (d,k,\textup{odd})$. Notice that we have $J_{4}^{tN} (d,k) = J_{4}^{N} (d,k)$.

We can now summarize what we know. Our information is enough to solve the case $m=2k+1$, the other case will take more time.

\nin \textbf{Case $m=2k+1$.}

We have the isomorphism
\begin{equation} \label{eqn:end-result-odd-disk}
	\sS^{s}_{\del} (L \times D^{2k+1}) \cong \widetilde{\sN}_{\del} (L \times D^{2k+1}) \cong \bigoplus_{J_{2}^{tN} (d,k,\textup{odd})} \ZZ/2.
\end{equation}

\nin \textbf{Case $m = 2k$.}

\nin  We obtain the short exact sequence
\begin{equation} \label{ses-lens-2d-1}
0 \ra \wL^s_{2d+2k} (\ZZ G) \xra{\partial} \sS^{s}_{\del}  (L^{2d-1} \times D^{2k})
\xra{\eta} \widetilde{\sN}_{\del}(L^{2d-1} \times D^{2k}) \ra 0,
\end{equation}
where
\begin{align}
\begin{split} \label{eqn:tilde-N-L-x-D}
n = 4u-1 \; : \; \widetilde{\sN}_{\del} (L^{2d-1} \times D^{2k}) & = \mathrm{ker} \; \big (
\bt_{4u-2} \co {\sN}_{\del} (L^{2d-1} \times D^{2k}) \ra \ZZ_2 \big ), \\
n = 4u+1 \; : \; \widetilde{\sN}_{\del} (L^{2d-1} \times D^{2k}) & = \sN_{\del} (L^{2d-1} \times D^{2k}).
\end{split}	
\end{align}
It will be convenient to use the decomposition
\begin{equation} \label{red-ni-lens-spaces}
\widetilde \sN_{\del} (L^{2d-1} \times D^{2k}) \cong T_{F} (d,k) \oplus T_{N} (d,k) \oplus T_{2} (d,k),
\end{equation}
where 
\[
T_F (d,k) \cong \begin{cases} \ZZ (t_{4l}) & k = 2l \\ 0 & k = 2l+1 \end{cases}
\]
and 
\[ 
T_N (d,k) = \bigoplus_{i \in rJ_{4}^{tN} (d,k)} \ZZ/N (t_{4i}), \quad  T_2 (d,k) = \bigoplus_{i\in J_{2}^{tN} (d,k)} \ZZ/2 (t_{4i-2}).
\]
Here $rJ_{4}^{tN} (d,k) = J_{4}^{tN} (d,k) \smallsetminus \{ l \}$ in case $k=2l$ and $rJ_{4}^{tN} (d,k) = J_{4}^{tN} (d,k)$ in case $k = 2l+1$. Then the cardinality of $rJ_{4}^{tN} (d,k)$ is equal to $c_N (d,k)$ from the statement of Theorem~\ref{thm:main-thm}. Similarly one may define $rJ_{2}^{tN} (d,k) = J_{2}^{tN} (d,k)$ in case $k=2l$ and $rJ_{2}^{tN} (d,k) = J_{2}^{tN} (d,k)  \smallsetminus \{ l+1 \}$ in case $k = 2l+1$. Then the cardinality of $rJ_{2}^{tN} (d,k)$ is equal to $c_2 (d,k)$ from the statement of Theorem~\ref{thm:main-thm}.

The first term in the sequence
(\ref{ses-lens-2d-1}) is understood by Theorem \ref{L(G)}, the third
term is understood by (\ref{red-ni-lens-spaces}). Hence we are left
with an extension problem, which we solve in Section \ref{sec:the-rho-invariant}. 

However, before that we mention some useful properties of the above calculations with respect to increasing $d$ via the suspension map \eqref{eqn:suspension-higher-str-sets} and with respect to changing the group $G$ via the transfer maps \eqref{eqn:transfer-str-sets-and-ni}

First note that the inclusion $L^{2d-1} \times D^{2k} \subset L^{2d+1} \times D^{2k}$ induces by transversality a restriction map on the normal invariants denoted by 
\begin{equation} \label{eqn:restriction-ni-dimension}
	\res \co \sN_{\del} (L^{2d+1} \times D^{2k}) \lra \sN_{\del} (L^{2d-1} \times D^{2k}).
\end{equation}
This map is related to the suspension homomorphism $\Sigma$ from \eqref{eqn:suspension-higher-str-sets} by the commutative diagram analogous to the diagram from \cite[Lemma 14A.3]{Wall(1999)}). In our situation we need to compose the horizontal maps $\eta$ with the projections onto the reduced normal invariants and to consider the map on $\widetilde{\sN}$ induced by \eqref{eqn:restriction-ni-dimension} (and denoted by the same symbol) so that we have the commutative diagram
\begin{equation} \label{susp-diagram}
\begin{split}
\xymatrix{
\sS^s_{\del} (L^{2d-1} \times D^{2k}) \ar[r]^{\eta} \ar[d]_{\Sigma} & \widetilde{\sN}_{\del} (L^{2d-1} \times D^{2k}) \\
\sS^s_{\del} (L^{2d+1} \times D^{2k}) \ar[r]^{\eta} & \widetilde{\sN}_{\del} (L^{2d+1} \times D^{2k}).
\ar[u]_{\res} }
\end{split}
\end{equation}
Clearly we have $t_{2i} = \res (t_{2i})$ and so the vertical map $\res$ from the diagram is an isomorphism when $n+2=2d+1+2k=4u-1$ and it is onto with kernel $\ZZ/N \oplus \ZZ/2$ when $n+2=2d+1+2k=4u+1$. A similar diagram exists also for the situation $\CC P^d = \CC P^{d-1} \ast \mathrm{pt}$ and the corresponding map $\res$ is always surjective in that case.

It is also useful to investigate Diagram \eqref{eqn:transfer-str-sets-and-ni} in the light of the calculations of the normal invariants. Of course, we study the case when the groups involved are $G < S^{1}$ so that we have the $S^{1}$-bundle $L^{2d-1} \ra \CC P^{d-1}$. In view of the isomorphisms \eqref{eqn:normal-invariants-cp^d-1} and \eqref{ni-lens-space-times-disk} the transfer map on normal invariants is just the induced map in cohomology, and hence on summands it is either isomorphism or the modulo $N$ reduction from $\ZZ$ to $\ZZ/N$. The indexing sets match. It will also be helpful to notice that the composition
\begin{equation} \label{eqn:fibering-ni-lens-x-disk-by-cp-x-disk}
	\sS_{\del} (\CC P^{d-1} \times D^{2k}) \xra{\eta} \sN_{\del} (\CC P^{d-1} \times D^{2k}) \xra{\textup{proj} \circ (p_{G}^{S^{1}})^{!}} \widetilde{\sN}_{\del} (L^{2d-1} \times D^{2k})
\end{equation}
is surjective for $n-1=2d-2+2k=4u+2$. This can be phrased as saying that any representative of any element in $\sS^{s}_{\del} (L^{2d-1} \times D^{2k})$ is normally cobordant to a representative of possibly another element of the same group which fibers over a fake $\CC P^{d-1} \times D^{2k}$. In case $n-1=2d-2+2k=4u$ this map is close to be surjective. Its image is the subgroup consisting of all but the top $\ZZ/N$ summand (the one which is the image of $\bs_{4u}$ with the index $u \in rJ_{4}^{N} (d,k)$). However, the corresponding summand in $\widetilde{\sN}_{\del} (L^{2d+1} \times D^{2k})$, which maps to this one under the map $\res$ from Diagram \eqref{susp-diagram}, is in the image of the map \eqref{eqn:fibering-ni-lens-x-disk-by-cp-x-disk} with $d$ replaced by $d+1$. This can be phrased by saying that in case $n-1=2d-2+2k=4u$ the suspension of any element of $\sS^{s}_{\del} (L^{2d-1} \times D^{2k})$ is normally cobordant to a representative of an element of $\sS^{s}_{\del} (L^{2d+1} \times D^{2k})$ which fibers over a fake $\CC P^{d} \times D^{2k}$. Compare to \cite[Lemma 14E.9]{Wall(1999)}


\section{The $\rho$-invariant} \label{sec:the-rho-invariant}


\subsection{Definition of the $\rho$-invariant} \label{subsec:def-of-rho-invariant}

Just as in \cite{Macko-Wegner(2009)} we will solve the extension problem by employing the~$\rho$-invariant. We first recall the definition of the $\rho$-invariant for closed manifolds as used in \cite[subsection 4.1]{Macko-Wegner(2009)} and its generalization from \cite{Madsen-Rothenberg(1989)}.

Let $\RhG :=  R(G) / \langle \textup{reg} \rangle$ where quotienting by $\langle
\textup{reg} \rangle$ means dividing by the regular representation and $\QQ \RhG = \QQ \otimes \RhG$.

\begin{defn}{\cite[Remark after Corollary 7.5]{Atiyah-Singer-III(1968)}} \label{defn-rho-1}
Let $X^{2u-1}$ be a closed oriented manifold with with a reference map $\lambda (X) \co X \ra BG$. Define
\begin{equation}
\rho (X,\lambda(X)) = \frac{1}{r} \cdot \Gsign(\widetilde Y) \in \QQ \RhG^{(-1)^u}
\end{equation}
for some $r \in \NN$ and $(Y,\partial Y)$ such that there exists $\lambda_Y \co Y \ra BG$ and $\partial Y = r \cdot X$. 
\end{defn}

As explained in \cite{Atiyah-Singer-III(1968)} this is well defined. There it is also explained that there is also another definition which
works for actions of compact Lie groups, in particular for $S^1$-actions, on certain odd-dimensional manifolds. Whenever the two definitions apply, they coincide. For $G < S^1$ we identify $R(G)$ with $\ZZ \Gh$ and we adopt the notation $\RhG = \ZZ [\chi] / \langle 1 +
\chi + \cdots + \chi^{N-1} \rangle$ following \cite[section 4.1]{Macko-Wegner(2009)}. As also explained in \cite[Definition 4.2]{Macko-Wegner(2009)}, or \cite[Definition 2.5]{Crowley-Macko(2011)}, the~$\rho$-invariant defines a function on simple structure sets:

\begin{defn} \label{defn:reduced-rho}
Let $X$ be a closed oriented manifold of dimension $n = (2u-1)$ with a reference map $\lambda (X) \co X \ra BG$. Define the function
\[
\wrho \co \sS^{s} (X) \ra \QQ \RhG^{(-1)^u} \quad \textup{by} \quad
\wrho ([h]) = \rho (M,\lambda (X) \circ h) - \rho (X,\lambda (X)),
\]
where the orientation on $M$ is chosen so that the homotopy
equivalence $h \co M \ra X$ is a map of degree $1$.
\end{defn}

The definition in the relative setting comes from \cite[Section 3]{Madsen-Rothenberg(1989)}. We need a little preparation.
Consider a closed oriented $a$-dimensional manifold $X$, $m \geq 1$ such that $n = a+m = 2u-1$ and an element $[h \co M \ra X \times D^m]$ in $\sS^{s}_\partial (X \times D^m)$. Let $M(h)$ be a closed manifold given by
\begin{equation} \label{defn:M(h)}
M (h) := M \cup_{\partial h} (X \times D^m).
\end{equation}
If $h$ is the identity we obtain $M (\id) \cong X \times S^m$, in general the map $h$ induces $M(h) \simeq X \times S^m$. We equip $M$ with an orientation so that $h$ is a map of degree $1$. The orientation on the closed manifold $M(h)$ can then be chosen so that it agrees with the given orientation on $M$ and it reverses the orientation on $X \times D^m$. If $X$ possesses a reference map $\lambda (X) \co X \ra BG$ then we obtain a reference map $\lambda (M(h)) \co M(h) \simeq X \times S^m \ra X \ra BG$. As explained in~\cite[Definition 2.5]{Crowley-Macko(2011)} (also a minor modification of \cite[(3.7)]{Madsen-Rothenberg(1989)}) we can make the following definition:

\begin{defn} \label{defn:reduced-rho-del}
Let $X$ be a closed oriented $a$-dimensional manifold with a reference map $\lambda (X) \co X \ra BG$ and let $m \geq 1$ be such that $n=a+m=2u-1$. Define the function
\[
\wrho_\partial \co \sS^{s}_\partial (X \times D^m) \ra \QQ \RhG^{(-1)^u}
\quad \textup{by} \quad  \wrho_\partial ([h]) := \rho (M(h),\lambda(M(h))).
\]
\end{defn}
Again this is well-defined and notice that $\wrho_\partial ([\id]) = 0$.


\subsection{Properties of the $\rho$-invariant} \label{subsec:prop-of-rho-inv}


First a basic example. For the standard $1$-dimensional lens space $L=L^1(\alpha_1)$ we have \cite[Proof of Theorem 14C.4]{Wall(1999)}
\begin{equation} \label{rho-alpha-k}
\rho (L^1(\alpha_1)) = f = \frac{1+\chi}{1-\chi} \in \QQ \RhGm.
\end{equation}
As explained in \cite[Chapter 14E, page 215]{Wall(1999)} we have
\begin{equation}
	(1-\chi)^{-1} = - (1/N) \cdot (1 + 2 \chi + 3 \chi^2 + \cdots N \chi^{N-1})
\end{equation}
which together with \eqref{rho-alpha-k} gives an expression used in calculations in \cite{Macko-Wegner(2009)}.

Next we note that the $\rho$-invariant in Definitions \ref{defn:reduced-rho} and \ref{defn:reduced-rho-del} is a homomorphism. This was shown in general in \cite{Crowley-Macko(2011)}. Also the $\rho$-invariant commutes with Siebenmann $4$-periodicity alias Cappell-Weinberger map from \eqref{eqn:siebenmann-periodicity}, see also \cite{Crowley-Macko(2011)}. As mentioned in the introduction this reduces the number of cases we need to study, but our treatment turns out to be independent of this observation. 

Recall that for the join $L (\alpha \ast \beta)$ of fake lens spaces $L (\alpha)$ and $L (\beta)$ we have \cite[chapter 14A]{Wall(1999)}
\begin{equation} \label{rho-join}
\rho (L(\alpha \ast \beta)) = \rho (L(\alpha)) \cdot \rho (L(\beta)).
\end{equation}
Recall that when $\beta$ is the standard $1$-dimensional representation this operation produces a map between the structure sets of lens spaces of different dimensions. We are interested in its generalization given by the suspension map $\Sigma$ of \eqref{eqn:suspension-higher-str-sets} and the behavior of $\wrho_{\del}$ with respect to this map. Madsen and Rothenberg show in \cite[Lemma 3.9]{Madsen-Rothenberg(1989)} that the $\wrho_{\del}$-invariant is indeed multiplicative with respect to this $\Sigma$. They use a slightly different definition of the $\wrho_{\del}$-invariant, which differs from our in a factor corresponding to the element $f$ from \eqref{rho-alpha-k}. Hence the commutativity of the diagram from~\cite[Lemma 3.9]{Madsen-Rothenberg(1989)} translates into the formula
\begin{equation} \label{eqn:rho-mult-wrt-join-higher-str-sets}
	\wrho_{\del} (\Sigma ([h])) = f \cdot \wrho_{\del} ([h]) \in \QQ \RhG^{(-1)^{u+1}}
\end{equation}
for $h \co M \ra L^{2d-1} \times D^{2k}$ representing an element in $\sS^{s}_{\del} (L^{2d-1} \times D^{2k})$

The $\rho$-invariant also behaves naturally with respect to the passage to a subgroup. How it works in our notation is explained in detail in Remark 4.4 in \cite{Macko-Wegner(2011)} if needed.











\subsection{The main diagram}
\label{sec:com-ladder}


We now have all the ingredients we need to analyze the surgery exact sequence for $X = L^{2d-1} \times D^{2k}$. Denoting $n = 2d-1+2k$ these can be summarized in the commutative ladder:
\[
\xymatrix{
0 \ar[r] & \wL_{n+1} (\ZZ G) \ar[r] \ar[d]_{G-\textup{sign}} & \sS^{s}_{\del} (L \times D^{2k}) \ar[d]^{\wrho_{\del}} \ar[r] & \wsN_{\del} (L \times D^{2k}) \ar[d]^{[\wrho_{\del}]} \ar[r] & 0 \\
0 \ar[r] & 4 \cdot \RhG^\pm \ar[r] & \QQ \RhG^{\pm} \ar[r] & \QQ \RhG^{\pm}/4 \cdot \RhG^{\pm} \ar[r] & 0  
} 
\]
By the following theorem we need to understand the kernel of $[\wrho_\del]$. 

\begin{thm} \label{thm:how-to-split-alpha-k}
Let $\bar T := \ker \big( [\wrho_{\del}]: \wsN_{\del} (L \times D^{2k}) \lra \QQ\RhG^{(-1)^{d+k}}/4 \cdot \RhG^{(-1)^{d+k}}
\big)$. Then we have
\[
\sS^{s}_{\del} (L \times D^{2k}) \cong F^{(-1)^{d+k}} \oplus \bar T
\]
where $F^{(-1)^{d+k}} := \wrho_{\del} (\sS^{s}_{\del} (L \times D^{2k}))$ is a free abelian group of rank $N/2-1$ if $d+k$ is odd and of rank $N/2$ if $d+k$ is even. 
\end{thm}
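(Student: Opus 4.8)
The plan is to extract the free summand $F^{(-1)^{d+k}}$ from the middle term of the surgery exact sequence using the $\rho$-invariant, exactly as in \cite[Section 4]{Macko-Wegner(2009)} but with the relative invariant $\wrho_\del$. First I would observe that the commutative ladder displayed above identifies the image of $\wrho_\del$ on $\sS^s_\del (L \times D^{2k})$: the bottom row is the short exact sequence $0 \to 4 \cdot \RhG^\pm \to \QQ\RhG^\pm \to \QQ\RhG^\pm / 4\cdot\RhG^\pm \to 0$ with the sign chosen as $(-1)^{d+k}$, and the left-hand vertical map is (up to the known scalar) an isomorphism onto $4 \cdot \RhG^{(-1)^{d+k}}$ by Theorem~\ref{L(G)} together with the fact that the $G$-signature detects $\wL_{n+1}(\ZZ G)$. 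A diagram chase then shows that $\wrho_\del (\sS^s_\del (L \times D^{2k}))$ is exactly the preimage in $\QQ\RhG^{(-1)^{d+k}}$ of $\im [\wrho_\del] \subseteq \QQ\RhG^{(-1)^{d+k}} / 4\cdot\RhG^{(-1)^{d+k}}$, so it contains $4\cdot\RhG^{(-1)^{d+k}}$ as a finite-index subgroup and is therefore a finitely generated free abelian group of the same rank as $\RhG^{(-1)^{d+k}}$, which is $N/2 - 1$ when $d+k$ is odd (the $(-1)$-eigenspace of $R_\CC(G)$ modulo the regular representation) and $N/2$ when $d+k$ is even. This gives the rank assertion and shows $F^{(-1)^{d+k}}$ is free.

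Next I would produce the splitting. Since $F := F^{(-1)^{d+k}} = \wrho_\del(\sS^s_\del(L\times D^{2k}))$ is free, the surjection $\wrho_\del \co \sS^s_\del(L \times D^{2k}) \twoheadrightarrow F$ admits a section $\sigma \co F \to \sS^s_\del(L \times D^{2k})$ as abelian groups. Then $\sS^s_\del(L \times D^{2k}) \cong \ker(\wrho_\del) \oplus \sigma(F) \cong \ker(\wrho_\del) \oplus F$. It remains to identify $\ker(\wrho_\del)$ with $\bar T = \ker [\wrho_\del]$. The map $\eta$ carries $\ker(\wrho_\del)$ into $\bar T$: indeed $[\wrho_\del] \circ \eta = (\text{projection}) \circ \wrho_\del$ by commutativity of the right-hand square, so anything killed by $\wrho_\del$ maps into $\bar T$. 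Conversely, restricting the top exact sequence to these kernels gives
\[
0 \to \wL_{n+1}(\ZZ G) \cap \ker(\partial^{-1}\wrho_\del) \to \ker(\wrho_\del) \xra{\eta} \bar T,
\]
and since $\wrho_\del \circ \partial$ equals the $G$-signature isomorphism onto $4\cdot\RhG^{(-1)^{d+k}}$, which is injective, the left term vanishes; exactness of the original sequence at $\wsN_\del$ together with a short chase shows $\eta|_{\ker(\wrho_\del)}$ is onto $\bar T$. Hence $\ker(\wrho_\del) \xra{\cong} \bar T$, and combining with the previous paragraph yields $\sS^s_\del(L \times D^{2k}) \cong F^{(-1)^{d+k}} \oplus \bar T$.

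\textbf{The main obstacle.} The genuinely delicate point is the claim that the left vertical map $\wL_{n+1}(\ZZ G) \to 4\cdot\RhG^\pm$ in the ladder is an isomorphism, i.e.\ that the $\rho$-invariant (equivalently the multisignature) detects the reduced $L$-group $\wL_{n+1}(\ZZ G)$ on the nose and not merely rationally or up to finite index. For $G = \ZZ/2^K$ this rests on Theorem~\ref{L(G)} of \cite{Hambleton-Taylor(2000)}, which identifies $\wL^s_{2v}(\ZZ G) \cong 4\cdot\RhG^{(-1)^v}$ with the isomorphism realized by the $G$-signature; one must check that the normalization (the factor of $4$, and the identification of $\partial$ with $\wrho_\del$ followed by the inclusion) is compatible with the conventions fixed in Section~\ref{sec:the-rho-invariant}, in particular with the multiplicative factor $f$ of \eqref{rho-alpha-k} built into $\wrho_\del$. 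The rest of the argument is formal homological algebra once this compatibility is in place; essentially all the content has been front-loaded into the construction of the ladder and the properties of $\wrho_\del$ recalled above.
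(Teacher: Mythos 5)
Your argument is correct and is essentially the argument the paper relies on: the paper's proof simply defers to Theorem 5.1 of \cite{Macko-Wegner(2009)}, whose proof is exactly this diagram chase in the commutative ladder — identifying $\wrho_{\del}(\sS^{s}_{\del}(L\times D^{2k}))$ as a finitely generated (hence free) subgroup of $\QQ\RhG^{(-1)^{d+k}}$ containing the full-rank lattice $4\cdot\RhG^{(-1)^{d+k}}$, splitting off this free image, and identifying $\ker(\wrho_{\del})$ with $\bar T$ via $\eta$ using that the $G$-signature is an isomorphism onto $4\cdot\RhG^{(-1)^{d+k}}$ by Theorem~\ref{L(G)}. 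You have also correctly isolated the one genuinely nontrivial input, namely the compatibility of $\wrho_{\del}\circ\partial$ with the $G$-signature isomorphism.
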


\begin{proof}
	The proof is the same as the proof of Theorem 5.1 in~\cite{Macko-Wegner(2009)}. The dimensions of $F^{\pm}$ are determined in \cite{Macko-Wegner(2009)} at the end of Section 4.1.
\end{proof}


\subsection{The generalized formula of Wall}
\label{subsec:generalized-formula-of-Wall}


In addition to the information we obtained in the previous subsection we need formulas to calculate the homomorphism $[\wrho_{\del}]$ in some cases. The formulas and the proofs are generalizations of the similar formulas in \cite[section 4.2]{Macko-Wegner(2009)}. The starting point is the following generalization of the formula of Wall from {\cite[Theorem 14C.4]{Wall(1999)}}.





\begin{thm} \label{thm:rho-formula-cp-times-disk}
Let $h \co Q \ra \CC P^{d-1} \times D^{2k}$ represent an element in the higher structure set $\sS_{\del} (\CC P^{d-1} \times D^{2k})$. Then for $t \in S^1$ we have
\[
\wrho_{S^1,\del} (t,[h]) = \sum_{i \in I_{4}^{S} (d,k)} 8 \cdot \bs_{4i} (\eta ([h])) \cdot (f^{d+k-2i} - f^{d+k-2i-2}) \in \CC 
\]
where $f = (1+t)/(1-t)$ and the indexing set $I_{4}^{S} (d,k)$ is defined in Section~\ref{sec:higher-str-set-for-cp}. 
\end{thm}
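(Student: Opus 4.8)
The strategy is to compute $\wrho_{S^1,\del}(t,[h])$ by reducing it, via the definition of the relative $\rho$-invariant, to the $G$-signature (here $S^1$-signature) of the glued closed manifold $Q(h) = Q \cup_{\del h} \CC P^{d-1} \times D^{2k}$, and then to apply the Atiyah--Singer $G$-signature theorem to $Q(h)$ together with the description of its $\ell$-class obtained in Section~\ref{sec:higher-str-set-for-cp}. Concretely, $\wrho_{S^1,\del}(t,[h]) = \rho_{S^1}(Q(h), \lambda)$ evaluated at $t$, and since $Q(h) \simeq \CC P^{d-1} \times S^{2k}$ carries the $S^1$-action that is trivial on the $S^{2k}$ factor and standard (via the Hopf-type action) on the $\CC P^{d-1}$ factor, the fixed point set is a union of copies of $S^{2k}$ sitting over the fixed points of the $S^1$-action on $\CC P^{d-1}$. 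The first step is therefore to set up this geometry carefully: write $Q(h)$ with its $S^1$-action, identify the fixed components, and invoke the $G$-signature formula expressing $\sign_{S^1}(t, Q(h))$ as a sum of contributions localized at the fixed components, each contribution being an integral over a fixed component of a characteristic class built from the normal bundle data and the tangential $\ell$-class.

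\textbf{Second step: reduce to Wall's closed computation.} Rather than re-deriving the localization formula from scratch, I would follow Wall's computation in \cite[Theorem~14C.4]{Wall(1999)} almost verbatim, with two adjustments. First, in \cite[14C]{Wall(1999)} Wall works with $\CC P^{d-1}$ itself (the case $k=0$); here the base of the computation is $\CC P^{d-1} \times S^{2k}$, so every cohomological integral picks up the $S^{2k}$ factor, and since $\ell(S^{2k}) = 1$ (cf. the discussion preceding \eqref{eqn:l-of-cp-j-k}) the only net effect is a degree shift: the integral that in Wall's case extracts the coefficient of $x^{2i}$ now extracts the coefficient of $x^{2i-k}y$ against $[\CC P^{d-1}\times S^{2k}]$. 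Second, Wall works in the smooth/PL category with a genuine linear lens space; in the present topological setting the tangent microbundle $\ell$-class of $Q(h)$ is not the pullback of $\ell(\CC P^{d-1}\times S^{2k})$ but rather $\ell(\xi)$, whose coefficients $\alpha_{u,v}$ are governed by Proposition~\ref{prop:alpha-and-beta-depend-linearly-on-s-4i}. So the plan is: (i) express $\wrho_{S^1,\del}(t,[h])$ as a sum over fixed components of $Q(h)$ of a term of the shape $\big(\text{(localization factor at }t\text{)}\cdot \ell(\xi)|_{\text{component}}\big)[\text{component}]$; (ii) observe that for the standard linear lens space $L^1(\alpha_1)$ the localization factor is exactly $f = (1+t)/(1-t)$, so that the join/product structure makes the contribution of the $\CC P^{d-1}$ directions a polynomial in $f$ exactly as in \eqref{rho-alpha-k} and \eqref{rho-join}; and (iii) collect terms.

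\textbf{Third step: extract the coefficients and identify the answer.} Writing $\ell(\xi) = \sum_{u,v}\alpha_{u,v}x^u y^v$ as in Section~\ref{sec:higher-str-set-for-cp}, the $G$-signature localization produces $\wrho_{S^1,\del}(t,[h]) = \sum_{u,v}\alpha_{u,v}\cdot(\text{monomial in }f)$, where the monomial attached to $x^u y^v$ is, up to Wall's bookkeeping, $f^{d-1-u}$ times the contribution of the $S^{2k}$ directions (which is trivial). Comparing with the baseline $\rho$-invariant of $\CC P^{d-1}\times S^{2k}$ itself (i.e. the $[\id]$ case, which is $0$ by the remark after Definition~\ref{defn:reduced-rho-del}) shows that only the \emph{difference} $\ell(\xi) - \ell(\CC P^{d-1}\times S^{2k})$ enters, and by \eqref{eqn:L-of-xi-versus-L-of-G-mod-TOP} together with \eqref{eqn:splitting-invariants-vs-pullback-of-l-G-TOP-1} this difference is, in the relevant degrees, governed by $8\cdot\bs_{4i}(\eta([h]))$. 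The telescoping combination $f^{d+k-2i} - f^{d+k-2i-2}$ arises exactly the way it does in Wall's proof: the dual class of the submanifold $\CC P^{2i-k}\times S^{2k}$ used to define $\bs_{4i}$ contributes the top power, while the "already accounted for" lower strata subtract the shifted power, and the indexing set that survives is precisely $I_4^S(d,k)$ — the indices for which $\bs_{4i}$ is a genuine structure-set invariant rather than being killed by the surgery obstruction map, matching the isomorphism \eqref{eqn:free-part-ss-cp^d-1} and \eqref{eqn:ss-cp^d-1}. I would then verify that the $\ZZ/2$-valued splitting invariants $\bs_{4i-2}$ contribute nothing: they correspond to $\alpha_{u,v}$ coefficients that are $2$-torsion in a context where the $\rho$-invariant is rationally valued, and Wall's argument shows these drop out (the relevant Pontrjagin-class adjustments are $8$-divisible for the $\ZZ$-part but the $\ZZ/2$-part does not affect $\ell$ rationally in the needed degrees).

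\textbf{Main obstacle.} The delicate point is not the formal structure but the precise matching of normalizations — the factor of $8$, the exact exponents $d+k-2i$ and $d+k-2i-2$, and the claim that the indexing set is $I_4^S(d,k)$ and not some shift of it. Getting these right requires carefully tracking three separate normalization conventions simultaneously: Wall's convention for $\ell(\G/\TOP)$ and the surgery obstruction formula \eqref{eqn:char-class-formula-surgery-obstr-1-ctd-case-dim-4i}, Madsen--Rothenberg's convention for $\wrho_\del$ (which, as noted after \eqref{rho-alpha-k}, differs from ours by the factor $f$), and the Atiyah--Singer localization normalization. The safest route is to pin everything down by checking the formula against the known case $k=0$, where it must specialize to \cite[Theorem~14C.4]{Wall(1999)} (modulo the $f$-factor discrepancy between the two definitions of $\wrho_\del$), and against the behaviour under the suspension map via \eqref{eqn:rho-mult-wrt-join-higher-str-sets}, which forces the $f$-dependence to be exactly a polynomial of the stated shape. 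A secondary technical nuisance is justifying the use of the $G$-signature/localization machinery for the \emph{topological} manifold $Q(h)$: this is handled, as elsewhere in the paper, by the rationalized equivalence $\BTOP_\QQ \simeq \BO_\QQ$ and \cite{Kirby-Siebenmann(1977)}, so that the rational Pontrjagin and $\ell$-classes are defined and the formula of \cite{Atiyah-Singer-III(1968)} applies rationally.
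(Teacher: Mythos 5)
Your broad outline correctly identifies the ingredients (pass to the closed manifold $Q(h)$, use the $G$-signature theorem of Atiyah--Singer, invoke Proposition~\ref{prop:alpha-and-beta-depend-linearly-on-s-4i} for linearity, compare with Wall's Theorem~14C.4, and use multiplicativity under suspension). But two issues prevent the proposal from being a proof.

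First, the geometric setup in your first step is wrong. The $\rho$-invariant here is attached to the \emph{free} $S^1$-action on $S^{2d-1}\times S^{2k}$ (Hopf on the first factor, trivial on the second) whose quotient is $Q(h)$; the paper applies the Atiyah--Singer--Bott formula~(7.9) of \cite{Atiyah-Singer-III(1968)}, which expresses the $\rho$-invariant of the quotient directly in terms of the $\sL$-class of $Q(h)$, with no fixed components appearing. Your description instead proposes to localize at fixed components of an $S^1$-action \emph{on} $Q(h)\simeq\CC P^{d-1}\times S^{2k}$ with fixed set a union of $S^{2k}$'s over the fixed points in $\CC P^{d-1}$. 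That is a different action and a different formula; moreover $Q(h)$ is only a \emph{fake} $\CC P^{d-1}\times S^{2k}$, so such an action need not exist. This confusion is not merely cosmetic: it is the free-action formula~(7.9) that makes the coefficients of $\ell(Q(h))$ (and hence, by Proposition~\ref{prop:alpha-and-beta-depend-linearly-on-s-4i}, the splitting invariants $\bs_{4i}$) appear linearly.

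Second, and more seriously, you do not supply the mechanism that actually determines the exponents and the factor of~$8$. The paper runs an induction on $d$: the coefficients $b_{d+k}^s$ in $\rho(h)=a_{d+k}+\sum_s b_{d+k}^s\bs_{4s}$ are related across $d$ by the suspension multiplicativity~\eqref{eqn:rho-mult-wrt-join-higher-str-sets}, giving $b_{d+k}^s=f^{d+k-2s-2}b_{2s+2}^s$; the induction base $d=1$ is new compared with Wall's closed case, since here $\mathrm{pt}\times S^{2k}$ bounds $S^1\times D^{2k+1}$ equivariantly and therefore $a_{d+k}=0$; and the top coefficient $b_{2r+2}^r=8(f^2-1)$ is computed directly by intersecting $Q(h)$ with the dual $W_{2r}$ of $\CC P^{2r-k}\times S^{2k}$ and comparing signatures. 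Your proposal replaces this derivation with a normalization ``check'' against the $k=0$ case and against suspension, which confirms but does not establish the shape $f^{d+k-2i}-f^{d+k-2i-2}$ and the indexing set $I_4^S(d,k)$. In short: the inductive scaffolding, the $d=1$ base case with the $S^{2k}$ factor, and the top-coefficient calculation are the substance of the proof, and they are missing.
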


\begin{proof}
	The proof follows the same general pattern as the proof of Theorem 14C.4 in~\cite{Wall(1999)}. We have to make sure that all steps either carry over from the case of $\CC P^{d-1}$ to our case of $\CC P^{d-1} \times D^{2k}$ or we have to modify them accordingly. In particular, we have to take care of the extra summand coming from the splitting invariant $\bs_{2k}$ along $\{ \textup{pt} \} \times D^{2k}$.
	
	Firstly, starting with the homotopy equivalence $h \co Q \ra \CC P^{d-1} \times D^{2k}$ which is a homeomorphism on the boundary, we use the homeomorphism $\del h$ to form the closed manifold $Q(h) = Q \cup_{\del h} (\CC P^{d-1} \times D^{2k})$ and it is the $\rho$-invariant of $Q(h)$ that we are studying. The map $h$ also induces a homotopy equivalence $\glueh \co Q (h) \ra \CC P^{d-1} \times S^{2k}$, which induces an isomorphism on cohomology. The passage from $h$ to $\glueh$ was already explained in the second half of Section~\ref{sec:higher-str-set-for-cp} together with the behavior of the respective invariants and now we will use that knowledge. 
	
	
	The manifold $Q (h)$ can be viewed as obtained from a free action of $S^{1}$ on the product $S^{2d-1} \times S^{2k}$ and hence the Atiyah-Singer-Bott formula (7.9) from \cite{Atiyah-Singer-III(1968)} can be used. It expresses the $\rho$-invariant in terms of the $\sL$-class of $Q (h)$ as
	\[
	\rho (Q(h),t) = \varepsilon - 2^{d+k-1} \Bigg( \frac{te^{\bar x}+1}{te^{\bar x}-1}\Bigg) \sL (Q(h)) [Q(h)],
	\] 
where $\varepsilon$ is zero in our case, $t \in S^{1} \subset \CC$ and $\bar x \in H^{2} (Q(h);\ZZ)$ is the first Chern class of the associated complex line bundle and hence we can assume it is the generator $\bar x$ which we chose earlier arbitrarily. The class $\sL (X)$ is the characteristic class given by (6.5) in \cite{Atiyah-Singer-III(1968)} and is related to $\ell (X)$ for an $(2u-2)$-dimensional manifold $X$ via the equation $\ell (X) [X] = 2^{u-1} \cdot \sL (X)[X]$.
	

	
	In Proposition \ref{prop:alpha-and-beta-depend-linearly-on-s-4i} at the end of Section \ref{sec:higher-str-set-for-cp} we showed that the coefficients of $\ell (Q(h))$ are linear in the splitting invariants $\bs_{4i}$.	Hence, just as in the proof of Theorem 14C.4 in \cite{Wall(1999)}, we obtain that the $\rho$-invariant can be expressed as a linear combination of the splitting invariants. But here we have to take as the indexing set $I_{4}^{S}(d,k)$ which has one more element than $I_{4}^{S}(d,0)$ and this has to be taken into the account. So we have
	\[
		\rho (h) = a_{d+k} + b_{d+k}^j \cdot \bs_{4j} + b_{d+k}^{j+1} \cdot \bs_{4(j+1)} + \cdots + b_{d+k}^{r} \cdot \bs_{4r}, \quad \textup{where} \; I_{4}^{S}(d,k) = \{ j, \ldots, r \}.
	\]

	We proceed by induction on $d$. At the induction beginning, when $d=1$, in \cite[Theorem 14C.4]{Wall(1999)} one had to deal with the point being the quotient of the free action of $S^1$ on itself. Here, we instead have the product of the point with the sphere $S^{2k}$ seen as the quotient of the free action of $S^1$ on $S^1 \times S^{2k}$ that is trivial on the second coordinate. Since this is the boundary of $S^1 \times D^{2k+1}$ we have that  
	\[
	\rho (h) = 0
	\]
	and hence $a_{d+k} = 0$. 
	
	In the induction step we use that the $\rho$-invariant is multiplicative with respect to modified join. From \eqref{eqn:rho-mult-wrt-join-higher-str-sets} it follows that 
	\[
		b_{d+k}^{s} = f^{d+k-2s-2} b_{2s+2}^{s}
	\] 
just as in \cite[Theorem 14C.4]{Wall(1999)}. Hence we assume we have all the $b_{d+k}^{s}$ except $b_{2r+2}^{r}$, which we need to calculate. 
	
To calculate $b_{2r+2}^{r}$ we need to look at the relation between $\bs_{4r}$ and the coefficients in $\sL (Q(h))$ even closer. Let $\delta_r x^{2r-k}y$ be the coefficient of $\bs_{4r}$ in $\sL (Q(h))$. 


As in \cite[Theorem 14C.4]{Wall(1999)} consider the preimage $W_{2r}$ of $\CC P^{2r-k} \times S^{2k}$, which is the dual of $x^{2r-k}y$, with normal bundle $\nu_{2r}$ of $\iota_{r} \co W_{2r} \hookrightarrow Q(h)$ and observe that the leading $0$-dimensional term in $\sL (\nu_{2r})$ is $1$. We look at the equation 
	\[
	8 \cdot \bs_{4r} (h) = \sign (W_{2r}) = 2^{2r} \sL (W_{2r}) [W_{2r}] = 2^{2r} (\iota_{r})^{\ast} \sL (Q(h)) \cdot \sL (\nu_{2r})^{-1} [W_{2r}].
	\]
	Both sides are linear functions of $\bs_{4r}$ and the coefficient on the right hand side is $2^{2r} \cdot \delta_r$ so we get
	\[
	\delta_{r} = 2^{3-2r}.
	\]
	Finally, exactly as in \cite[Theorem 14C.4]{Wall(1999)} we obtain
	\[
	b_{2r+2}^{r} = 2^{3-2r} \cdot -2^{2r+1} \cdot 2t/(1-t)^{2} = 8 (f^2-1).
	\]
 	This finishes the proof.
\end{proof}

Next, similarly as in \cite{Macko-Wegner(2009)} we pass from the formula for the $\rho$-invariant of elements in $\sS^{s}_{\del} (\CC P^{d-1} \times D^{2k})$ to the $\rho$-invariant of elements in $\sS^{s}_{\del} (L^{2d-1} \times D^{2k})$. The idea is the same, we use the transfer maps induced by $p \co L^{2d-1} \ra \CC P^{d-1}$ followed by projection on the reduced normal invariants (see discussion at the end of Section \ref{sec:lens-times-disk}) :
\begin{equation}
	\begin{split}
		\xymatrix{
			\sS^{s}_{\del} (\CC P^{d-1} \times D^{2k}) \ar[d] \ar[r] & \sN_{\del} (\CC P^{d-1} \times D^{2k}) \ar[d] \\
			\sS^{s}_{\del} (L^{2d-1} \times D^{2k}) \ar[r] & \widetilde{\sN}_{\del} (L^{2d-1} \times D^{2k}).
		}
	\end{split}
\end{equation}
Let us now remember that we are only interested in the case $N = 2^K$ for $K \geq 1$ and hence assume this from now on. The composition from the upper left to the lower right corner is surjective if $n-1= 2(d-1)+2k = 4u+2$, which immediately gives the desired formula for $[\widetilde{\rho}_{\del}]$, see the first part of Proposition~\ref{rho-formula-lens-sp} below. If $n-1=4u$, then this composition is onto the subgroup obtained by leaving out the last $\ZZ/N$-summand. To include the last summand into the formula the same trick as in  \cite[Lemma 4.10, 4.11]{Macko-Wegner(2009)} can be used in our situation. Its proof only used the suspension map $\Sigma$ which is available in our case as well, the multiplicativity of the $\rho$-invariant with respect to $\Sigma$ and the rest of the proof was completely algebraic and hence carries over word-for-word to our case so that we obtain the following lemma.

\begin{lem} \label{all-summands-n=4m+2}
Let $d = 2e$ and $k=2l$ or let $d = 2e+1$ and $k = 2l+1$ so that in either case we have $n-1=2(d-1)+2k=4u$. 

Let $c \in \sS^s_{\del} (L^{2d-1} \times D^{2k})$ be such that $c = b + a$ where $b = p^{!} (\widetilde b)$ for some
$\widetilde b \in \sS_{\del} (\CC P^{d-1} \times D^{2k})$ with $s_{2i} = \bs_{2i} (\eta \widetilde b)$ and $a$ is such that there exists an $\widetilde a \in \sN_{\del} (\CC P^{d-1} \times D^{2k})$ such that $\bs_{2i} (\widetilde a) = 0$ for $i < 2u$,  $s_{4u} = \bs_{4u} (\widetilde a)$ and $\eta (a) = p^{!} (\widetilde a)$. Then
\[
\widetilde\rho_{\del} (c) = \sum_{i \in I^{S}_{4} (d,k)} \!\! 8 \cdot s_{4i} \cdot (f^{d+k-2i} - f^{d+k-2i-2}) + 8 \cdot s_{4u}
\cdot f + z \quad \in \quad \QQ\RhGm
\]
for some $z \in 4 \cdot \RhGm$.
\end{lem}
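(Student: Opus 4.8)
The plan is to deduce Lemma~\ref{all-summands-n=4m+2} from the already-established formula for $\widetilde\rho_\del$ of elements coming from $\CC P^{d-1}\times D^{2k}$ (which is the special case $a=0$) together with the multiplicativity of $\widetilde\rho_\del$ under the suspension map $\Sigma$. Since $\widetilde\rho_\del$ is a homomorphism (by \cite{Crowley-Macko(2011)}, recalled in Subsection~\ref{subsec:prop-of-rho-inv}) and $c=b+a$, it suffices to compute $\widetilde\rho_\del(b)$ and $\widetilde\rho_\del(a)$ separately. The term $\widetilde\rho_\del(b)$ is handled by applying the transfer-compatibility of $\widetilde\rho_\del$ (Remark~4.4 of \cite{Macko-Wegner(2011)}) to the formula of Theorem~\ref{thm:rho-formula-cp-times-disk}: under the transfer $p^!$ the element $f=(1+t)/(1-t)$ specializes, and the formula of Theorem~\ref{thm:rho-formula-cp-times-disk} becomes $\sum_{i\in I_4^S(d,k)} 8\cdot s_{4i}\cdot(f^{d+k-2i}-f^{d+k-2i-2})$ in $\QQ\RhGm$, exactly the first sum in the claimed formula. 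So the real content is isolating the contribution of $a$, the extra top-dimensional summand, which is precisely the summand of $\widetilde\sN_\del(L^{2d-1}\times D^{2k})$ that is \emph{not} in the image of the composition~\eqref{eqn:fibering-ni-lens-x-disk-by-cp-x-disk} in the case $n-1=4u$.

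For $\widetilde\rho_\del(a)$, the key observation — borrowed from the proof of \cite[Lemma 4.10, 4.11]{Macko-Wegner(2009)} — is that although $a$ itself does not fiber over a fake $\CC P^{d-1}\times D^{2k}$, its suspension $\Sigma(a)\in\sS^s_\del(L^{2d+1}\times D^{2k})$ does: this is exactly the statement recorded at the very end of Section~\ref{sec:lens-times-disk} (compare \cite[Lemma 14E.9]{Wall(1999)}), namely that the summand of $\widetilde\sN_\del(L^{2d+1}\times D^{2k})$ mapping onto the top $\ZZ/N$-summand of $\widetilde\sN_\del(L^{2d-1}\times D^{2k})$ under $\res$ from Diagram~\eqref{susp-diagram} lies in the image of~\eqref{eqn:fibering-ni-lens-x-disk-by-cp-x-disk} with $d$ replaced by $d+1$. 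Hence I would pick a lift $\widetilde a'\in\sN_\del(\CC P^{d}\times D^{2k})$ with $p^!(\widetilde a')$ normally cobordant to $\Sigma(a)$, apply Theorem~\ref{thm:rho-formula-cp-times-disk} (with $d$ replaced by $d+1$) and the transfer compatibility to compute $\widetilde\rho_\del(\Sigma(a))$, and then invert the relation $\widetilde\rho_\del(\Sigma(a)) = f\cdot\widetilde\rho_\del(a)$ coming from \eqref{eqn:rho-mult-wrt-join-higher-str-sets}. Extracting the top splitting invariant $s_{4u}$ from the formula for $\Sigma(a)$ and dividing by $f$ produces a term $8\cdot s_{4u}\cdot f$ modulo the ambiguity, which is the second term in the claimed formula; the lower splitting invariants of $\widetilde a'$ can be arranged to vanish (or their contributions absorbed), matching the hypothesis $\bs_{2i}(\widetilde a)=0$ for $i<2u$.

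The division by $f$ is where the indeterminacy $z\in 4\cdot\RhGm$ enters: $f$ is not a unit in $\RhGm$, so $\widetilde\rho_\del(a)$ is only determined by $f\cdot\widetilde\rho_\del(a)$ up to the annihilator-type ambiguity controlled by the exact sequence in the main diagram of Subsection~\ref{sec:com-ladder}, whose image group is $4\cdot\RhGm$. I would make this precise by working in $\QQ\RhGm$ throughout, noting that $\widetilde\rho_\del$ takes values there, that the claimed closed-form expression is a well-defined element of $\QQ\RhGm$, and that any two preimages of a given $f\cdot\xi$ under multiplication by $f$ differ by an element killed by $f$, which (after chasing the ladder, exactly as in \cite{Macko-Wegner(2009)}) lands in $4\cdot\RhGm$; this accounts for the ``$+z$''. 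Since all the geometric inputs — surjectivity of~\eqref{eqn:fibering-ni-lens-x-disk-by-cp-x-disk} in the stated dimensions, the suspension-fibering statement, multiplicativity of $\widetilde\rho_\del$ under $\Sigma$, and the transfer behavior — are available verbatim in the case $N=2^K$, the argument is essentially the one of \cite[Lemma 4.10, 4.11]{Macko-Wegner(2009)} with $\CC P^{d-1}$ replaced by $\CC P^{d-1}\times D^{2k}$ and using Theorem~\ref{thm:rho-formula-cp-times-disk} in place of \cite[Theorem 14C.4]{Wall(1999)}.

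The main obstacle I anticipate is purely bookkeeping rather than conceptual: one must check that the index shift from $I_4^S(d,k)$ to $I_4^S(d+1,k)$ under suspension, combined with the substitution of $d+1$ for $d$ and the extra factor of $f$ in \eqref{eqn:rho-mult-wrt-join-higher-str-sets}, reassembles exactly into $\sum_{i\in I^S_4(d,k)}8\cdot s_{4i}\cdot(f^{d+k-2i}-f^{d+k-2i-2}) + 8\cdot s_{4u}\cdot f$ and not some shifted or off-by-one variant; the $d=2e,\,k=2l$ versus $d=2e+1,\,k=2l+1$ case split must be tracked so that $n-1=4u$ in both and the top index $u$ is correctly identified with $e+l$ respectively $e+l+1$. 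Once the indexing is pinned down against the tables in Section~\ref{sec:higher-str-set-for-cp} and Section~\ref{sec:lens-times-disk}, the rest is the cited algebra of \cite{Macko-Wegner(2009)}, which the authors already note carries over word-for-word.
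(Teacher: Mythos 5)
Your proposal takes essentially the same route as the paper, which proves the lemma by reducing it to \cite[Lemmas 4.10, 4.11]{Macko-Wegner(2009)} using precisely the ingredients you identify: the decomposition $c=b+a$, Theorem~\ref{thm:rho-formula-cp-times-disk} together with transfer-compatibility for $b$, and for $a$ the suspension $\Sigma$, the fibering of $\Sigma(a)$ over a fake $\CC P^{d}\times D^{2k}$, and the multiplicativity formula \eqref{eqn:rho-mult-wrt-join-higher-str-sets}. The one imprecise point is your account of where $z$ enters: on $\QQ\RhGm$ multiplication by $f=(1+\chi)/(1-\chi)$ is actually injective (the annihilator of $1+\chi$ lies entirely in the $+$-eigenspace), so the indeterminacy is not an ``annihilator-type ambiguity'' but comes from the facts that $\Sigma(a)$ is only \emph{normally cobordant} to a fibered element (contributing a $G$-signature term in $4\cdot\RhGp$) and that one must then verify that the relevant preimage under multiplication by $f$ lands in $4\cdot\RhGm$ --- which is exactly the ``completely algebraic'' part that both you and the paper delegate to \cite{Macko-Wegner(2009)}.
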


Using discussion around \eqref{eqn:fibering-ni-lens-x-disk-by-cp-x-disk} this immediately gives an analogue of Proposition 4.12 from \cite{Macko-Wegner(2009)} which reads as follows.

\begin{prop} \label{rho-formula-lens-sp}
Let $t = (t_{2i})_i \in \widetilde \sN_{\del} (L^{2d-1} \times D^{2k})$. If $n-1= 2(d-1) + 2k$, then we have for the homomorphism $[\wrho_\del] \co \widetilde \sN_{\del} (L^{2d-1} \times D^{2k}) \lra \QQ \RhG^{(-1)^{d+k}} / 4 \cdot \RhG^{(-1)^{d+k}}$ that
\begin{align*}
n-1 = 4u+2 \; : \; [\widetilde \rho_{\del}] (t) & = \sum_{i \in J^{N}_{4} (d,k)} 8 \cdot
 t_{4i} \cdot f^{d+k-2i-2} \cdot (f^2-1) \\
n-1 = 4u \; : \; [\widetilde \rho_{\del}] (t) & = \sum_{i \in J^{N}_{4} (d,k) \smallsetminus \{u\}} 8 \cdot
 t_{4i} \cdot f^{d+k-2i-2} \cdot (f^2-1) + 8 \cdot t_{4u}
\cdot f.
\end{align*}
\end{prop}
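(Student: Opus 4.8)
The plan is to obtain both formulas from the projective version, Theorem~\ref{thm:rho-formula-cp-times-disk}, by transfer along the $S^1$-bundle $p\co L^{2d-1}\ra\CC P^{d-1}$, in the same way that Proposition~4.12 in \cite{Macko-Wegner(2009)} follows from \cite[Theorem 14C.4]{Wall(1999)}. The dichotomy in the statement is precisely the one recorded after \eqref{eqn:fibering-ni-lens-x-disk-by-cp-x-disk}: for $n-1=4u+2$ the composition
\[
\sS_{\del}(\CC P^{d-1}\times D^{2k})\xra{\eta}\sN_{\del}(\CC P^{d-1}\times D^{2k})\xra{\textup{proj}\circ(p_G^{S^1})^!}\widetilde\sN_{\del}(L^{2d-1}\times D^{2k})
\]
is onto, whereas for $n-1=4u$ its image is all of $\widetilde\sN_{\del}(L^{2d-1}\times D^{2k})$ except the top $\ZZ/N$-summand $\ZZ/N(t_{4u})$.

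First I would treat the case $n-1=4u+2$. Given $t=(t_{2i})_i$, I choose $[\widetilde h]\in\sS_{\del}(\CC P^{d-1}\times D^{2k})$ with $\textup{proj}\circ(p_G^{S^1})^!(\eta[\widetilde h])=t$ and set $c:=(p_G^{S^1})^!([\widetilde h])\in\sS^s_{\del}(L^{2d-1}\times D^{2k})$, so $\eta(c)$ projects to $t$. By naturality of the $\rho$-invariant under restriction of the acting group (\cite[Section 4.2]{Macko-Wegner(2009)}, \cite[Remark 4.4]{Macko-Wegner(2011)}), $\wrho_{\del}(c)$ is obtained from $\wrho_{S^1,\del}(\,\cdot\,,[\widetilde h])$ by restricting the $S^1$-variable to $G$, i.e.\ by reading $f=(1+t)/(1-t)$ as the element $f=(1+\chi)/(1-\chi)\in\QQ\RhGm$; Theorem~\ref{thm:rho-formula-cp-times-disk} then gives $\wrho_{\del}(c)=\sum_{i\in I_4^S(d,k)}8\cdot\bs_{4i}(\eta[\widetilde h])\cdot(f^{d+k-2i}-f^{d+k-2i-2})$. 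Since the transfer carries $\bs_{4i}(\eta[\widetilde h])$ to the component $t_{4i}$ and, in the two parities of $(d,k)$ with $d+k$ even, $I_4^S(d,k)=J_4^N(d,k)$, rewriting $f^{d+k-2i}-f^{d+k-2i-2}=f^{d+k-2i-2}(f^2-1)$ and reducing modulo $4\cdot\RhG^{(-1)^{d+k}}$ yields the first formula. Reading this off an arbitrary lift $[\widetilde h]$ is legitimate because the commutative ladder of Section~\ref{sec:com-ladder} already shows $[\wrho_{\del}]$ is a well-defined homomorphism on $\widetilde\sN_{\del}(L^{2d-1}\times D^{2k})$, so the right-hand side cannot depend on the choice.

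For the case $n-1=4u$ the same transfer argument computes $[\wrho_{\del}]$ on the subgroup generated by all $t_{4i-2}$ and all $t_{4i}$ with $i\neq u$, leaving only the top summand $t_{4u}$ undetermined; to bring it in I would invoke Lemma~\ref{all-summands-n=4m+2}. Writing $t=\eta(c)$ with $c=b+a$ as there, $b$ transferred from $\sS_{\del}(\CC P^{d-1}\times D^{2k})$ and $a$ carrying the $t_{4u}$-summand, the lemma gives $\wrho_{\del}(c)=\sum_{i\in I_4^S(d,k)}8\cdot s_{4i}\cdot(f^{d+k-2i}-f^{d+k-2i-2})+8\cdot s_{4u}\cdot f+z$ with $z\in 4\cdot\RhGm$; reducing modulo $4\cdot\RhGm$ kills $z$, and since $I_4^S(d,k)=J_4^N(d,k)\smin\{u\}$ in the two parities of $(d,k)$ with $d+k$ odd, this is exactly the second formula.

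The only thing needing genuine care is the combinatorial bookkeeping: checking in all four parity cases of $(d,k)$ that the projective-side index sets ($I_4^S(d,k)$, resp.\ its union with $\{u\}$) match the lens-side sets $J_4^N(d,k)$ in the statement, and confirming that the transferred structures — together with the single extra generator from Lemma~\ref{all-summands-n=4m+2} in the $n-1=4u$ case — really do exhaust $\widetilde\sN_{\del}(L^{2d-1}\times D^{2k})$. The analytic core, the relation between the splitting invariants and the $\rho$-invariant, has already been absorbed into Theorem~\ref{thm:rho-formula-cp-times-disk} and Lemma~\ref{all-summands-n=4m+2}, so no further work of that type is required.
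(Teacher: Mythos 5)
Your proposal is correct and follows essentially the same route as the paper: the paper likewise derives the first formula from the surjectivity of the composition \eqref{eqn:fibering-ni-lens-x-disk-by-cp-x-disk} together with Theorem~\ref{thm:rho-formula-cp-times-disk} and naturality of $\rho$ under restriction to the subgroup, and the second formula from Lemma~\ref{all-summands-n=4m+2} by reducing modulo $4\cdot\RhG^{(-1)^{d+k}}$. Your index-set bookkeeping ($I_4^S(d,k)=J_4^N(d,k)$ for $d+k$ even and $I_4^S(d,k)=J_4^N(d,k)\smallsetminus\{u\}$ for $d+k$ odd) checks out against the tables in Sections~\ref{sec:higher-str-set-for-cp} and~\ref{sec:lens-times-disk}.
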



\section{Calculations} \label{sec:calculations}


We want to complete the proof of Theorem \ref{thm:main-thm} by calculating the summand $\bar T = \ker [\wrho_{\del}]$ from Theorem~\ref{thm:how-to-split-alpha-k}. We achieve this in Propositions \ref{T-2} and \ref{T-N} using formulas from Proposition~\ref{rho-formula-lens-sp}. 




In \cite{Macko-Wegner(2009)} in a similar situation the reduced normal invariants were an abelian group which was a sum of $N$-torsion and $2$-torsion. As described in \eqref{red-ni-lens-spaces} here we have an extra summand from $D^{2k}$ and also the indexing is a little different. Nevertheless it is possible to make only minor changes in the setup from \cite{Macko-Wegner(2009)} to obtain our results. 

\begin{prop} \label{T-2}
We have
\[
T_2 (d,k) \subseteq \bar T.
\]
\end{prop}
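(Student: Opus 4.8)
The plan is to show that every element of $T_2(d,k)$, which by \eqref{red-ni-lens-spaces} is the $2$-torsion summand $\bigoplus_{i\in J_2^{tN}(d,k)} \ZZ/2\,(t_{4i-2})$ of $\widetilde\sN_\del(L^{2d-1}\times D^{2k})$, lies in the kernel $\bar T$ of the homomorphism $[\wrho_\del]$. Since $\bar T = \ker [\wrho_\del]$ and $[\wrho_\del]$ is a homomorphism into $\QQ\RhG^{(-1)^{d+k}}/4\cdot\RhG^{(-1)^{d+k}}$, it suffices to check that $[\wrho_\del](t) = 0$ for $t$ a generator $t_{4i-2}$ of one of the $\ZZ/2$-summands; equivalently, that $[\wrho_\del]$ vanishes identically on the subgroup $T_2(d,k)$. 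The key tool is the explicit formula for $[\wrho_\del]$ from Proposition~\ref{rho-formula-lens-sp}: in both parity cases $n-1=4u+2$ and $n-1=4u$, the formula for $[\wrho_\del](t)$ only involves the components $t_{4i}$ of $t$ (the $4$-divisible indices, i.e. the summands $T_F\oplus T_N$), and does not involve the components $t_{4i-2}$ at all.

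First I would recall that the decomposition \eqref{red-ni-lens-spaces} writes $\widetilde\sN_\del(L^{2d-1}\times D^{2k}) \cong T_F(d,k)\oplus T_N(d,k)\oplus T_2(d,k)$, with $T_2(d,k)$ spanned by the $t_{4i-2}$ for $i\in J_2^{tN}(d,k)$. Then, applying Proposition~\ref{rho-formula-lens-sp}: if $n-1=4u+2$ we have $[\wrho_\del](t)=\sum_{i\in J_4^N(d,k)} 8\cdot t_{4i}\cdot f^{d+k-2i-2}\cdot(f^2-1)$, and if $n-1=4u$ we have $[\wrho_\del](t)=\sum_{i\in J_4^N(d,k)\smin\{u\}} 8\cdot t_{4i}\cdot f^{d+k-2i-2}\cdot(f^2-1) + 8\cdot t_{4u}\cdot f$. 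In either case, if $t\in T_2(d,k)$ then all $t_{4i}=0$, so every term in the sum vanishes and $[\wrho_\del](t)=0$. Hence $T_2(d,k)\subseteq\ker[\wrho_\del]=\bar T$, which is the claim.

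The only subtlety — and the place where one must be slightly careful rather than the real obstacle — is matching indexing conventions: one must confirm that the summands appearing in $T_2(d,k)$ are genuinely indexed by the $(4i-2)$-type classes and that these are exactly the variables absent from the Proposition~\ref{rho-formula-lens-sp} formula, i.e. that there is no hidden contribution of a $2$-torsion class to the $4$-divisible part through the identification of $\widetilde\sN_\del$ with cohomology. This follows from the fact that the $\rho$-invariant formula was derived (via the transfer from $\CC P^{d-1}$ and Theorem~\ref{thm:rho-formula-cp-times-disk}, Lemma~\ref{all-summands-n=4m+2}) purely in terms of the $\bs_{4i}$, i.e. the index-$4i$ splitting invariants, and the transfer map on normal invariants respects the decomposition into $4i$- and $(4i-2)$-indexed summands (it is the induced map in cohomology, as noted at the end of Section~\ref{sec:lens-times-disk}). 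So no $2$-torsion class can map to a nonzero value under $[\wrho_\del]$, and the inclusion $T_2(d,k)\subseteq\bar T$ holds. I expect the proof to be short, essentially a direct appeal to Proposition~\ref{rho-formula-lens-sp}, with the bookkeeping of indexing sets being the one thing to state carefully.
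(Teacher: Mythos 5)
Your proof is correct and is exactly the paper's argument: the formulas for $[\wrho_{\del}]$ in Proposition~\ref{rho-formula-lens-sp} involve only the components $t_{4i}$, so $[\wrho_{\del}]$ vanishes on $T_2(d,k)$, which is spanned by the $t_{4i-2}$. The paper states this in one sentence; your elaboration on the indexing is accurate but not needed beyond that observation.
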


\begin{proof}
By Proposition \ref{rho-formula-lens-sp} the formula for $[\wrho_{\del}]$ only depends on $t_{4i}$.
\end{proof}

Next we investigate the behavior of the map $[\wrho_{\del}]$ on the summands $T_F (d,k)$ and $T_N (d,k)$ via the formulas from Proposition \ref{rho-formula-lens-sp}. Denote 
\begin{equation} \label{eqn:bar-T-F-plus-N}
	\bar T_{F \oplus N} (d,k) = T_{F \oplus N} (d,k) \cap \bar T, \quad \textup{where} \quad T_{F \oplus N} (d,k) = T_{F} (d,k) \oplus T_{N} (d,k).
\end{equation}
The formulas are very similar to those from Proposition 4.15 from \cite{Macko-Wegner(2009)} except the source of the map has an extra copy of $\ZZ$ and the indexing is a little different. Nevertheless the same general strategy as in \cite{Macko-Wegner(2009)} can be employed to find the kernel of $[\wrho_{\del}]$. The strategy is explained at the beginning of Section 5 in  \cite{Macko-Wegner(2009)}. Briefly, first a passage from $T_N (d)$ is made to the underlying abelian group $\ZZ/N [x](d)$ of the appropriately truncated polynomial ring in the variable $x$ over $\ZZ/N$ via isomorphisms (5.1) and (5.3), so that $[\wrho]$ is transformed into maps (5.2) and (5.4); here the references are to formulas in \cite{Macko-Wegner(2009)}. Then another passage is made to the underlying abelian subgroup $\ZZ[x](d)$ the appropriately truncated polynomial ring in the variable $x$ over the integer coefficients $\ZZ$. The problem is thus translated to finding 
\begin{equation} \label{eqn:rho-hat}
\textup{the preimage} \; A_K (d) \; \textup{of} \; 4 \cdot \RhG^{(-1)^d} \; \textup{under the map} \; [\widehat{\rho}] \co \ZZ[x](d) \ra \QQ \RhG^{(-1)^d}	
\end{equation}
given by the same formulas (5.2) and (5.4) keeping in mind that the coefficients are now integers. In symbols, the situation in \cite{Macko-Wegner(2009)} was like this:
\begin{equation} \label{eqn:scheme-T-vs-Z-x}
	\xymatrix{
		T_N (d) \ar[r]^(0.4){\cong} & \ZZ/N [x](d) & \ZZ[x](d) \ar[l]_(0.4){\red_N}
	}
\end{equation}
so that the following transformations lead to the map $[\widehat{\rho}]$ given for $d=2e$ as
\begin{equation} \label{eqn:rho-hat-formula}
	t = (t_{4i}) \mapsto q_t (x) = \sum t_{4(i+1)} \cdot x^{c-i-1} \; \leadsto \; [\widehat{\rho}] (q) = 8 \cdot (f^{2}-1) \cdot q (f^{2}).	
\end{equation}
and for $d = 2e+1$ as
\begin{equation} \label{eqn:rho-hat-formula-second}
	t = (t_{4i}) \mapsto q_t (x) = \sum (t_{4(i+1)} - t_{4i}) \cdot x^{c-i-1} + t_{4} \cdot x^{c-1} \; \leadsto \; [\widehat{\rho}] (q) = 8 \cdot f \cdot q (f^{2}).	
\end{equation}
In both cases $c = \lfloor (d-1)/2 \rfloor$. Then $\bar T_N (d) = \red_{N} (A_K (d))$, and the subgroup $A_K (d)$ is shown to be equal to certain subgroup $B_K (d)$ in Theorem 5.4 of \cite{Macko-Wegner(2009)}, which is the main calculation of that paper.

In our case these maps have to be modified as follows. The indexing set is $J_{4}^{tN} (d,k)$ and for $c (a) =\lfloor (a-1)/2 \rfloor$ we use the following  truncated polynomial ring $\ZZ [x] (a) = \{ q \in \ZZ[x] \; | \: \deg(q) \leq c(a)-1 \}$. The coefficients of the polynomial $q$ are indexed as $q = q_0 + q_1 \cdot x + \cdots + q_{c(a)-1} \cdot x^{c(a)-1}$. Note that $c(2b+1)=c(2b+2)$ hence in picking $a$ below we do have some freedom which we will use in order to get compatibility between the truncation and the signs of the eigenspaces.

We have four cases in each of which the maps $\red_{N,d,k} \co q \mapsto t$ correspond to the composition of $\red_N$ and the inverse of the isomorphism $T_N (d) \xra{\cong}\ZZ/N [x](d)$ from Diagram~\eqref{eqn:scheme-T-vs-Z-x}: 

\noindent \textbf{Case $d=2e$, $k=2l$.} Hence $n=2d-1+2k=4(e+l)-1$ and we have 
\begin{equation} \label{eqn:red-2e-2l}
		\red_{N,d,k} \co \ZZ [x] (d+2) \ra T_{F \oplus N} (d,k) = T_F (d,k) \oplus T_N (d,k) 
\end{equation}
given by
\begin{equation} \label{eqn:rho-hat-del-formula-2e-2l}
		q \mapsto t=(t_{4j} := q_{(e+l-1)-j})_j \; \leadsto \; [\widehat{\rho}] (q) = 8 \cdot (f^{2}-1) \cdot q (f^{2}).
\end{equation}

\noindent \textbf{Case $d=2e+1$, $k=2l+1$.} Hence $n=2d-1+2k=4(e+l)+3$ and we have 
\begin{equation} \label{eqn:red-2e+1-2l+1}
		\red_{N,d,k} \co \ZZ [x] (d+1) \ra T_{F \oplus N} (d,k) = T_F (d,k) \oplus T_N (d,k) 
\end{equation}
given by
\begin{equation} \label{eqn:rho-hat-del-formula-2e+1-2l+1}
		q \mapsto t=(t_{4j} := q_{(e+l)-j})_j \; \leadsto \; [\widehat{\rho}] (q) = 8 \cdot (f^{2}-1) \cdot q (f^{2}).
\end{equation}

\noindent \textbf{Case $d=2e$, $k=2l+1$.} Hence $n=2d-1+2k=4(e+l)+1$ and we have 
\begin{equation} \label{eqn:red-2e-2l+1}
		\red_{N,d,k} \co \ZZ [x] (d+1) \ra T_{F \oplus N} (d,k) = T_F (d,k) \oplus T_N (d,k) 
\end{equation}
given by
\begin{equation} \label{eqn:rho-hat-del-formula-2e-2l+1}
	q \mapsto t=(t_{4j} := \sum_{v=1}^{j-l} q_{e-v})_j \; \leadsto \; [\widehat{\rho}] (q) = 8 \cdot f \cdot q (f^{2}).	
\end{equation}

\noindent \textbf{Case $d=2e+1$, $k=2l$.} Hence $n=2d-1+2k=4(e+l)+1$ and we have 
\begin{equation} \label{eqn:red-2e+1-2l}
		\red_{N,d,k} \co \ZZ [x] (d+2) \ra T_{F \oplus N} (d,k) = T_F (d,k) \oplus T_N (d,k) 
\end{equation}
given by
\begin{equation} \label{eqn:rho-hat-del-2e+1-2l}
	q \mapsto t=(t_{4j} := \sum_{v=0}^{j-l} q_{e-v})_j \; \leadsto \; [\widehat{\rho}] (q) = 8 \cdot f \cdot q (f^{2}).	
\end{equation}
In each case the truncation $\ZZ[x](a)$ is chosen so that $c(a)$ is the cardinality of the indexing set $J_{4}^{N}(d,k)$ and so that $a$ and $(d+k)$ have the same parity.


Hence we now have the maps
\begin{equation} \label{eqn:rho-hat-k-not-0}
	[\widehat{\rho}] \co \ZZ[x](a) \ra \QQ \RhG^{(-1)^{a}}
\end{equation}
with appropriate $a$ in the respective cases and we need to find the preimage $A_K (a)$ of $4 \cdot \RhG^{(-1)^{a}}$.

This means that the setup is the same as in \cite{Macko-Wegner(2009)}. Indeed, let us in particular look at the definition of the sets $B_K (a)$ on pages 17-18 of \cite{Macko-Wegner(2009)}. First it is mentioned that for each $n \in \NN$ there are universal polynomials $r_n^-$ and $r_n^+$ of degree $n$ with leading coefficient $1$ with certain properties (they are explicitly constructed later in Definition 5.26 of \cite{Macko-Wegner(2009)}). Then a definition is made
\begin{equation} \label{eqn:B_K-a}
	B_{K} (a) := \left \{ \sum_{n = 0}^{c(a)-1} a_n \cdot 2^{\textup{max}\{K-2n-2,0\}} \cdot r^{(-1)^{a}}_{n} \; | \; a_n \in \ZZ \right \}
\end{equation}
By universality we mean that the polynomials $r_n^-$ or $r_n^+$ do not depend on $a$, the same polynomial $r_n^-$ or $r_n^+$ is used in all sets $B_K (a)$. The main calculational result of \cite{Macko-Wegner(2009)} is Theorem 5.4 where it is proved that 
\begin{equation} \label{eqn:A_K-a=B_K-a}
	A_K (a) = B_K (a).
\end{equation}

\begin{prop} \label{T-N}
When $k=2l$ we have
\[
\bar T_{F \oplus N} (d,k) \cong \ZZ \oplus \bigoplus_{i =1}^{c_{N} (d,k)} \ZZ/{2^{\min\{K,2i\}}}.
\]
When $k=2l+1$ we have
\[
\bar T_{F \oplus N} (d,k) \cong \bigoplus_{i =1}^{c_{N} (d,k)} \ZZ/{2^{\min\{K,2i\}}}.
\]
\end{prop}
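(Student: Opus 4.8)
The plan is to reduce the computation of $\bar T_{F\oplus N}(d,k)$ to the already-established identity $A_K(a) = B_K(a)$ of \cite{Macko-Wegner(2009)} (formula \eqref{eqn:A_K-a=B_K-a}), and then to read off the quotient $\ZZ[x](a)/A_K(a)$ explicitly from the description \eqref{eqn:B_K-a} of $B_K(a)$. First I would fix, in each of the four parity cases, the correct truncation degree $a$ (so that $c(a) = |J_4^N(d,k)|$ and $a \equiv d+k \pmod 2$) and observe that, by construction of the maps $\red_{N,d,k}$ in \eqref{eqn:rho-hat-del-formula-2e-2l}--\eqref{eqn:rho-hat-del-2e+1-2l}, the composite $[\wrho_\del]\circ \red_{N,d,k}$ is exactly the map $[\widehat\rho]\colon \ZZ[x](a) \to \QQ\RhG^{(-1)^a}$ from \eqref{eqn:rho-hat-k-not-0}. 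Hence $\red_{N,d,k}^{-1}(\bar T_{F\oplus N}(d,k)) = A_K(a)$, and since $\red_{N,d,k}$ is surjective onto $T_{F\oplus N}(d,k)$ (by the discussion around \eqref{eqn:fibering-ni-lens-x-disk-by-cp-x-disk} and Lemma~\ref{all-summands-n=4m+2}, which supplies the top summand), we get $\bar T_{F\oplus N}(d,k) \cong \ZZ[x](a)/A_K(a)$ compatibly with the torsion structure — except that one must track a shift by $\ZZ$ or not, coming from whether the $T_F(d,k)$ summand is present, i.e. whether $k$ is even or odd.

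Next I would compute $\ZZ[x](a)/B_K(a)$. Since the universal polynomials $r_n^{\pm}$ have degree $n$ and leading coefficient $1$, the set $\{r_0^{(-1)^a}, \dots, r_{c(a)-1}^{(-1)^a}\}$ is a $\ZZ$-basis of $\ZZ[x](a)$; in this basis $B_K(a)$ is the subgroup generated by $2^{\max\{K-2n-2,0\}}\cdot r_n^{(-1)^a}$ for $n = 0,\dots,c(a)-1$. Therefore
\[
	\ZZ[x](a)/B_K(a) \;\cong\; \bigoplus_{n=0}^{c(a)-1} \ZZ/2^{\max\{K-2n-2,0\}}.
\]
Re-indexing by $i = c(a)-1-n$ so that $n = c(a)-1-i$ turns the exponent $\max\{K-2n-2,0\}$ into $\max\{K - 2(c(a)-1-i) - 2, 0\} = \max\{K + 2i - 2c(a), 0\}$. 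It remains to check, case by case using the tables for $J_4^N(d,k)$ and the definition of $c_N(d,k)$, that $c(a) = c_N(d,k)$ when $k$ is odd (so the exponent becomes $\max\{K+2i-2c_N, 0\}$, which after a final shift $i \mapsto i$ matches $\min\{K, 2i\}$ once the trivial summands $n$ with $K - 2n - 2 \le 0$ are discarded and the remaining ones re-indexed from $1$), and that when $k$ is even one of the basis vectors — the one corresponding to the $t_{4l}$ coordinate of $T_F(d,k)$, which maps to a free $\ZZ$ rather than into torsion — contributes a $\ZZ$ summand instead, giving the extra $\ZZ$ in the first displayed formula. Concretely: drop the indices $n$ with $K \le 2n+2$ (these give $0$), keep $n = 0,\dots,\min\{c(a),\lceil K/2\rceil\}-1$ or so, and verify the arithmetic $\max\{K-2n-2,0\} \leadsto \min\{K,2i\}$ under $i = c_N(d,k) - n$ matches for all relevant ranges; this is a routine but slightly fiddly bookkeeping exercise with the floor/ceiling functions that I would carry out separately for $d = 2e$ versus $d = 2e+1$.

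The main obstacle is this last bookkeeping step: one must be scrupulous about the precise truncation degree $a$ chosen in each of the four cases (the freedom $c(2b+1) = c(2b+2)$ is used to align the sign of the eigenspace with the parity of $a$, and getting the wrong representative shifts all the exponents), about the re-indexing between the polynomial-coefficient index $n$ and the geometric index $i$ of the summands $\ZZ/2^{\min\{K,2i\}}$, and about which coordinate carries the free $\ZZ$ when $k$ is even. None of this is conceptually hard once $A_K(a) = B_K(a)$ is invoked, but it is exactly the kind of index-chasing where an off-by-one error would corrupt the statement, so I would present the $d=2e,k=2l$ case in full detail and indicate that the other three are entirely analogous, as is done for the companion results in \cite{Macko-Wegner(2009)}.
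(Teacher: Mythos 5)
There is a genuine error at the pivotal step. From your two (correct) observations — that $\red_{N,d,k}^{-1}\bigl(\bar T_{F\oplus N}(d,k)\bigr) = A_K(a)$ and that $\red_{N,d,k}$ is surjective — the conclusion is that $\bar T_{F\oplus N}(d,k) = \red_{N,d,k}(A_K(a))$, i.e.\ the \emph{image} of $A_K(a)$ in $T_{F\oplus N}(d,k)$, which is isomorphic to $A_K(a)/(A_K(a)\cap\ker\red_{N,d,k})$. You instead identify $\bar T_{F\oplus N}(d,k)$ with the cokernel $\ZZ[x](a)/A_K(a)$, which is a different group. The paper (following \cite{Macko-Wegner(2009)}, where explicitly $\bar T_N(d)=\red_N(A_K(d))$) takes the image: since $A_K(a)=B_K(a)$ is free on the generators $2^{\max\{K-2n-2,0\}}\cdot r_n^{\pm}$ and the $r_n^{\pm}$ form a basis of $\ZZ[x](a)$, reducing the torsion coordinates mod $N=2^K$ produces cyclic summands of order $2^{K-\max\{K-2n-2,0\}}=2^{\min\{K,2n+2\}}$, which under $i=n+1$ is exactly $\ZZ/2^{\min\{K,2i\}}$, while the top-degree generator contributes the free $\ZZ$ when $k=2l$.

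Your computation of the quotient, $\ZZ[x](a)/B_K(a)\cong\bigoplus_{n}\ZZ/2^{\max\{K-2n-2,0\}}$, is correct as far as it goes, but no reindexing can convert $\max\{K-2n-2,0\}$ into $\min\{K,2i\}$: the two exponents are complementary, satisfying $\max\{K-2n-2,0\}+\min\{K,2n+2\}=K$, so the quotient and the image are Pontrjagin-dual rather than isomorphic in general. For instance, with $K=4$ and three torsion coordinates the quotient is $\ZZ/4$ while the correct torsion is $\ZZ/4\oplus\ZZ/16\oplus\ZZ/16$. The ``routine but fiddly bookkeeping'' you defer to the end is therefore not fiddly but impossible. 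Apart from this inversion, your setup — the choice of the truncation degree $a$, the appeal to $A_K(a)=B_K(a)$, the use of the basis $r_n^{\pm}$, and the explanation of where the extra $\ZZ$ comes from when $k$ is even — matches the paper's argument.
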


\begin{proof}
Equation \eqref{eqn:A_K-a=B_K-a} and the definition of $B_K(a)$ show that $A_K^k(a)$ is a free abelian subgroup of $\ZZ[x](a)$ with a basis given by polynomials $2^{\max\{K-2n-2,0\}} \cdot r_n^\pm$. Under the homomorphism $\ZZ[x](a) \ra T_{F \oplus N} (d,k)$ the subgroup $A_K (a)$ for appropriate $a$ is mapped onto a subgroup isomorphic to a direct sum as claimed by the theorem, since $c_N (d,k)$ is the cardinality of the indexing set $rJ_{4}^{N} (d,k)$.
\end{proof}

\begin{proof}[Proof of Theorem \ref{thm:main-thm}]
The proof for $L \times D^{2k}$ follows from Theorem \ref{thm:how-to-split-alpha-k}, Proposition \ref{T-N} and suitable reindexing in Proposition \ref{T-2}. The part about the invariant $\wrho_{\del}$ follows directly from Theorem \ref{thm:how-to-split-alpha-k}. The invariants $\bbr_{0}$ are taken out of the sums $\bar T_{F \oplus N} (d,k)$ of Proposition \ref{T-N} and $T_{2} (d,k)$ of Proposition \ref{T-2}, respectively. They are given as splitting invariants along $\{ \ast \} \times S^{2k}$ and hence play a special role and by extracting them from the sum we stress this point. The invariant $\bbr$ is then defined abstractly as coming from the torsion summand of Proposition \ref{T-N} and the invariant $\br$ defined abstractly as coming from the torsion summand of Proposition \ref{T-2}. The proof for $L \times D^{2k+1}$ follows from \eqref{eqn:end-result-odd-disk} by calculating the cardinality of the indexing set.
\end{proof}

\begin{proof}[Proof of Corollary \ref{cor:higehr-str-sets-L-times-S}]
	Follows from the isomorphism \eqref{eqn:higher-str-set-X-times-S} and the calculation of the structure sets $\sS^{s} (X \times D^{m},X \times S^{m-1}) = \sN (X)$ from \eqref{eqn:str-set-rel-not-rel-bdry}. The normal invariants $\sN (L)$ are calculated in (3.9) of \cite{Macko-Wegner(2009)} which gives the indexing in the statement of the corollary and the invariants $\br'$ and $\br''$ are given by the corresponding components.
\end{proof}

\section{Final Remarks} \label{sec:final-remarks}

One obvious future direction would be to try to obtain a better geometric description of the invariants $\bbr$ from Theorem \ref{thm:main-thm}.
In \cite{Macko-Wegner(2011)} there was offered an inductive obstruction theoretic description of the corresponding invariants when $m=0$. Such a description works also in the present case with the proof very similar to the case $m=0$, so we refrain from repeating it here and we refer the reader to \cite{Macko-Wegner(2011)}. Of course, even better would be a non-inductive description, but this remains open even in the case $m=0$.

Another future direction would be to address the case $N = 2^K \cdot M$ for $M$ odd, again similarly as is done in \cite{Macko-Wegner(2011)}. We plan to do this in a future work. The reason it is not included here is that the localization which is the main tool in \cite{Macko-Wegner(2011)} becomes more difficult and hence we want to deal with it separately. 

\section*{Acknowledgments}

We thank James F. Davis for a discussion about formula \eqref{eqn:L-of-xi-versus-L-of-G-mod-TOP}.

\small
\bibliography{lens-spaces}

\def\cprime{$'$}
\begin{thebibliography}{CLM18}

\bibitem[AS68]{Atiyah-Singer-III(1968)}
M.~F. Atiyah and I.~M. Singer.
\newblock The index of elliptic operators. {III}.
\newblock {\em Ann. of Math. (2)}, 87:546--604, 1968.

\bibitem[CLM18]{Crowley-Lueck-Macko(2019)}
Diarmuid Crowley, Wolfgang L\"uck, and Tibor Macko.
\newblock {\em Surgery theory: Foundations}.
\newblock book in preparation. available at the webpages of the authors, 2018.

\bibitem[CM11]{Crowley-Macko(2011)}
Diarmuid Crowley and Tibor Macko.
\newblock The additivity of the {$\rho$}-invariant and periodicity in
  topological surgery.
\newblock {\em Algebr. Geom. Topol.}, 11(4):1915--1959, 2011.

\bibitem[CW87]{Cappell-Weinberger(1985)}
Sylvain Cappell and Shmuel Weinberger.
\newblock A geometric interpretation of {S}iebenmann's periodicity phenomenon.
\newblock In {\em Geometry and topology ({A}thens, {G}a., 1985)}, volume 105 of
  {\em Lecture Notes in Pure and Appl. Math.}, pages 47--52. Dekker, New York,
  1987.

\bibitem[Dav00]{Davis(2000)}
James~F. Davis.
\newblock Manifold aspects of the {N}ovikov conjecture.
\newblock In {\em Surveys on surgery theory, Vol. 1}, pages 195--224. Princeton
  Univ. Press, Princeton, NJ, 2000.

\bibitem[HT00]{Hambleton-Taylor(2000)}
Ian Hambleton and Laurence~R. Taylor.
\newblock A guide to the calculation of the surgery obstruction groups for
  finite groups.
\newblock In {\em Surveys on surgery theory, Vol. 1}, volume 145 of {\em Ann.
  of Math. Stud.}, pages 225--274. Princeton Univ. Press, Princeton, NJ, 2000.

\bibitem[KS77]{Kirby-Siebenmann(1977)}
Robion~C. Kirby and Laurence~C. Siebenmann.
\newblock {\em Foundational essays on topological manifolds, smoothings, and
  triangulations}.
\newblock Princeton University Press, Princeton, N.J., 1977.
\newblock With notes by J.~Milnor and M.~F.~Atiyah, Annals of Mathematics
  Studies, No. 88.

\bibitem[LdM71]{LdM(1971)}
S.~L{\'o}pez~de Medrano.
\newblock {\em Involutions on manifolds}.
\newblock Springer-Verlag, New York, 1971.
\newblock Ergebnisse der Mathematik und ihrer Grenzgebiete, Band 59.

\bibitem[Mac07]{Macko(2007)}
T.~Macko.
\newblock The block structure spaces of real projective spaces and orthogonal
  calculus of functors.
\newblock {\em Transactions of the AMS}, 359:349--383, 2007.

\bibitem[MM79]{Madsen-Milgram(1979)}
Ib~Madsen and R.~James Milgram.
\newblock {\em The classifying spaces for surgery and cobordism of manifolds},
  volume~92 of {\em Annals of Mathematics Studies}.
\newblock Princeton University Press, Princeton, N.J., 1979.

\bibitem[MR89]{Madsen-Rothenberg(1989)}
Ib~Madsen and Melvin Rothenberg.
\newblock On the classification of ${G}$-spheres. {I}{I}.\ {P}{L} automorphism
  groups.
\newblock {\em Math. Scand.}, 64(2):161--218, 1989.

\bibitem[MW09]{Macko-Wegner(2009)}
Tibor Macko and Christian Wegner.
\newblock On fake lens spaces with fundamental group of order a power of 2.
\newblock {\em Algebr. Geom. Topol.}, 9(3):1837--1883, 2009.

\bibitem[MW11]{Macko-Wegner(2011)}
Tibor Macko and Christian Wegner.
\newblock On the classification of fake lens spaces.
\newblock {\em Forum Math.}, 23(5):1053--1091, 2011.

\bibitem[Qui70]{Quinn(1970)}
Frank Quinn.
\newblock A geometric formulation of surgery.
\newblock In {\em Topology of Manifolds (Proc. Inst., Univ. of Georgia, Athens,
  Ga., 1969)}, pages 500--511. Markham, Chicago, Ill., 1970.

\bibitem[Ran92]{Ranicki(1992)}
A.~A. Ranicki.
\newblock {\em Algebraic {$L$}-theory and topological manifolds}, volume 102 of
  {\em Cambridge Tracts in Mathematics}.
\newblock Cambridge University Press, Cambridge, 1992.

\bibitem[Ran02]{Ranicki(2002)}
A.~A. Ranicki.
\newblock {\em Algebraic and geometric surgery}.
\newblock Oxford Mathematical Monographs. The Clarendon Press Oxford University
  Press, Oxford, 2002.
\newblock Oxford Science Publications.

\bibitem[Ran09]{Ranicki(2009)}
Andrew Ranicki.
\newblock A composition formula for manifold structures.
\newblock {\em Pure Appl. Math. Q.}, 5(2, Special Issue: In honor of Friedrich
  Hirzebruch. Part 1):701--727, 2009.

\bibitem[Sul96]{Sullivan(1996)}
D.~P. Sullivan.
\newblock Triangulating and smoothing homotopy equivalences and homeomorphisms.
  {G}eometric {T}opology {S}eminar {N}otes.
\newblock In {\em The Hauptvermutung book}, volume~1 of {\em $K$-Monogr.
  Math.}, pages 69--103. Kluwer Acad. Publ., Dordrecht, 1996.

\bibitem[Wal99]{Wall(1999)}
C.~T.~C. Wall.
\newblock {\em Surgery on compact manifolds}, volume~69 of {\em Mathematical
  Surveys and Monographs}.
\newblock American Mathematical Society, Providence, RI, second edition, 1999.
\newblock Edited and with a foreword by A. A. Ranicki.

\bibitem[WW01]{Weiss-Williams(2001)}
Michael Weiss and Bruce Williams.
\newblock Automorphisms of manifolds.
\newblock In {\em Surveys on surgery theory, Vol. 2}, volume 149 of {\em Ann.
  of Math. Stud.}, pages 165--220. Princeton Univ. Press, Princeton, NJ, 2001.

\end{thebibliography}
\bibliographystyle{alpha}

\end{document}